\theoremstyle{plain}
\newtheorem{theorem}{Theorem}
\newtheorem{lemma}[theorem]{Lemma}
\newtheorem{corollary}[theorem]{Corollary}
\providecommand{\mk}{\cellcolor[gray]{.8}}
\providecommand{\MK}{\cellcolor[gray]{.5}}
\def\eref#1{$(\ref{#1})$}
\def\sref#1{\S$\ref{#1}$}
\def\lref#1{Lemma~$\ref{#1}$}
\def\tref#1{Theorem~$\ref{#1}$}
\def\cref#1{Conjecture~$\ref{#1}$}
\def\Cref#1{Corollary~$\ref{#1}$}
\renewcommand{\geq}{\geqslant}
\renewcommand{\leq}{\leqslant}
\renewcommand{\ge}{\geqslant}
\renewcommand{\le}{\leqslant}
\renewcommand{\emptyset}{\varnothing}
\def\Z{\mathbb{Z}}
\def\O{\mathcal{O}}
\def\F{\mathcal{F}}
\def\T{\mathcal{T}}
\def\={\equiv}
\def\dfrac#1#2{\lower0.15ex\hbox{\large$\frac{#1}{#2}$}}
\title{Mutually orthogonal binary frequency squares}
\author{
Thomas Britz \thanks{
School of Mathematics and Statistics, UNSW Sydney, NSW 2052, Australia.}\\
\texttt{britz@unsw.edu.au}\\
\and
Nicholas J. Cavenagh\thanks{
Department of Mathematics,
 The University of Waikato,
 Private Bag 3105,
 Hamilton 3240, New Zealand.}\\
\texttt{nickc@waikato.ac.nz}\\ 
\and
Adam Mammoliti
\thanks{
School of Mathematics, Monash University, Clayton 3800, Australia.}\\ 
\texttt{adam.mammoliti@monash.edu}\\ 
\and
Ian M. Wanless
\footnotemark[3]\\ 
\texttt{ian.wanless@monash.edu}\\
}
\begin{document}

\date{}
\maketitle

\begin{abstract}
A \emph{frequency square} is a matrix in which each row and column is a 
permutation of the same multiset of symbols.
We consider only {\em binary} frequency squares of order $n$ with
$n/2$ zeroes and $n/2$ ones in each row and column.  Two such
frequency squares are \emph{orthogonal} if, when superimposed, each of
the 4 possible ordered pairs of entries occurs equally often. In this
context we say that a $k$-MOFS$(n)$ is a set of $k$ binary
frequency squares of order $n$ in which each pair of squares is
orthogonal.

A $k$-MOFS$(n)$ must satisfy $k\le(n-1)^2$, and any MOFS achieving
this bound are said to be \emph{complete}.  For any $n$ for which
there exists a Hadamard matrix of order $n$ we show that there exists
at least $2^{n^2/4-O(n\log n)}$ isomorphism classes of complete
MOFS$(n)$.  For $2<n\equiv2\pmod4$ we show that there exists a
$17$-MOFS$(n)$ but no complete MOFS$(n)$.

A $k$-maxMOFS$(n)$ is a $k$-MOFS$(n)$ that is not contained in any 
$(k+1)$-MOFS$(n)$. By computer enumeration, we establish that there
exists a $k$-maxMOFS$(6)$ if and only if $k\in\{1,17\}$ or $5\le k\le
15$.  We show that up to isomorphism there is a unique
$1$-maxMOFS$(n)$ if $n\equiv2\pmod4$, whereas no $1$-maxMOFS$(n)$
exists for $n\equiv0\pmod4$.  We also prove that there exists a
$5$-maxMOFS$(n)$ for each order $n\equiv 2\pmod{4}$ where $n\geq 6$.
\end{abstract}

\noindent {\bf MSC 2010 Codes: 05B15}

\noindent {Keywords: Frequency square; MOFS; Hadamard matrix; relation; trade.}  

\section{Introduction}

In what follows, rows and columns of an $n\times n$ array are each
indexed by $N(n)=\{1,2,\dots,n\}$.
A \emph{frequency square} $L$ of \emph{type}
$F(n;\lambda_1,\lambda_2,\dots,\lambda_m)$ is an $n\times n$ array
such that symbol $i$ occurs $\lambda_i$ times in each row and
$\lambda_i$ times in each column for each $i\in N(m)$; necessarily
$\sum_{i=1}^m\lambda_i=n$.  In the case where
$\lambda_1=\lambda_2=\dots=\lambda_m=\lambda$ we say that $L$ is of
type $F(n;\lambda)$.  A
frequency square of type $F(n;1)$ is a \emph{Latin square} of
order~$n$.  Two frequency squares of type
$F(n;\lambda_1,\lambda_2,\dots,\lambda_m)$ are \emph{orthogonal} if
each ordered pair $(i,j)$ occurs $\lambda_i\lambda_j$ times when the
squares are superimposed.  A set of \emph{mutually orthogonal
  frequency squares} (MOFS) is a set of frequency squares in which
each pair of squares is orthogonal.

Research into frequency squares focuses mainly on constructing sets of
MOFS, motivated originally by problems in statistical experiment
design.  Hedayat, Raghavarao and Seiden~\cite{HRS} showed that the
maximum possible size of a set of MOFS of type $F(n;\lambda)$ is
$(n-1)^2/(m-1)$; such a set is called \emph{complete}.  Complete MOFS
of type $F(n;\lambda)$ for $n/\lambda>2$ are only known to exist when
$n$ is a prime power~\cite{LM,LZD,Mav,street79}; a unified theory for
all known constructions is given in~\cite{JMM}.

Starting with a set of mutually orthogonal Latin squares and replacing
some subset of the symbols by zeroes, and replacing all other symbols
by ones, we can obtain a set of binary MOFS.  A slightly less obvious
connection between binary MOFS and other designs is the following.  An
{\em equidistant permutation array} $A(n,d;k)$ is a $k\times n$ array
in which each row contains each integer from $1$ to $n$ exactly once
and any two distinct rows differ in exactly $d$ positions.  One can
construct $k$ MOFS
of type $F(n;n-1,1)$ by writing down the permutation matrices that
correspond to the rows of an $A(n,n-1;k)$.
It is known from \cite{DV} that there exists an $A(n,n-1;2n-4)$ for
any $n\geq 6$ and from \cite{CRC} that there exists an
$A(q^2+q+1,q^2+q,q^3+q^2)$ for any prime power $q$.

Two sets of MOFS are \emph{isomorphic} if one can be obtained from the
other by some sequence of the following operations:
\begin{itemize}[topsep=4pt,partopsep=0pt,itemsep=2pt,parsep=2pt]
  \item Applying the same permutation to the rows of all squares in the set.
  \item Applying the same permutation to the columns of all squares in the set.
  \item Transposing all squares in the set.
  \item Permuting the symbols in one of the squares.
  \item Permuting the squares within the set (in cases where we have imposed an
    order on the set).
\end{itemize}
Isomorphism is an equivalence relation and the equivalence classes it
induces are \emph{isomorphism classes}.

For the remainder of this paper we restrict ourselves to frequency
squares of type $F(n;\lambda)$ where $n/\lambda=2$. In other words,
our squares have just two symbols, which we will take to be 0 and 1.
We will not say it each time, but all subsequent mention of MOFS will
refer to these binary MOFS.  As we are assuming that both symbols
must occur equally often within each row, the order of our MOFS must
be even.  We use MOFS$(n)$ to denote MOFS of order $n$. If there are
$k$ MOFS in the set then we write $k$-MOFS$(n)$.

The following result was
proved by Federer~\cite{Fe}; see also~\cite{street79}.

\begin{theorem}\label{t:Hadamard}
  If there exists a Hadamard matrix of order $n$,
  then there exists a complete {\rm MOFS}$(n)$.
\end{theorem}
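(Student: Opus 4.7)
The plan is to give an explicit construction producing $(n-1)^2$ pairwise orthogonal binary frequency squares directly from a Hadamard matrix $H$ of order $n$, following the classical idea attributed to Federer.

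First I would normalise $H=(H_{i,j})$ so that its first row and first column consist entirely of $+1$'s. The Hadamard property then forces every other row and every other column of $H$ to be balanced, containing exactly $n/2$ entries equal to $+1$ and $n/2$ equal to $-1$. For each pair $(a,b)$ with $2\le a,b\le n$ I would define the $n\times n$ $\{0,1\}$-matrix
\[
  M_{a,b}(i,j)=\frac{1-H_{i,a}H_{j,b}}{2}.
\]
The first task is to verify that each $M_{a,b}$ is a binary frequency square of order $n$: the sum along row $i$ equals $n/2-\tfrac{1}{2}H_{i,a}\sum_{j}H_{j,b}$, which equals $n/2$ because column $b$ of $H$ is orthogonal to the all-ones column~$1$; column sums are handled symmetrically.

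The main step is pairwise orthogonality. For distinct index pairs $(a,b)\ne(a',b')$, I would count the cells in which $M_{a,b}=M_{a',b'}=0$, namely
\[
  \frac{1}{4}\sum_{i,j}\bigl(1+H_{i,a}H_{j,b}\bigr)\bigl(1+H_{i,a'}H_{j,b'}\bigr).
\]
Expanding the product produces four summands. Three of them vanish because $\sum_{i}H_{i,a}=\sum_{j}H_{j,b}=0$ whenever $a,b\ge 2$, while the remaining cross term factors as $\bigl(\sum_{i}H_{i,a}H_{i,a'}\bigr)\bigl(\sum_{j}H_{j,b}H_{j,b'}\bigr)$, which equals $n^2\delta_{a,a'}\delta_{b,b'}$ by column orthogonality of $H$. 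For distinct index pairs this cross term vanishes, so the count reduces to $n^2/4$. Running the same expansion with the three other sign choices shows that each of the four ordered pairs $(0,0),(0,1),(1,0),(1,1)$ occurs exactly $n^2/4$ times when $M_{a,b}$ and $M_{a',b'}$ are superimposed, which is the required orthogonality condition.

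The construction therefore produces $(n-1)^2$ mutually orthogonal binary frequency squares, attaining the upper bound $k\le(n-1)^2$ and yielding a complete MOFS$(n)$. I do not foresee any serious obstacle: the entire argument is a short piece of bilinear bookkeeping based on the Hadamard relations, and the only place where care is needed is to track exactly which index pairs contribute a nonzero Kronecker term in the cross sum.
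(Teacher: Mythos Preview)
Your argument is correct and is precisely the classical Federer construction. The paper itself does not supply a proof of this theorem: it merely states the result and cites Federer~\cite{Fe} and Street~\cite{street79}. Your write-up fills that gap cleanly, and it also verifies the structural feature the paper later relies on in the proof of \tref{t:manyMOFS}, namely that every square $M_{a,b}$ has the property that any two of its rows are either equal or complementary (since row $i$ of $M_{a,b}$ is determined entirely by the sign $H_{i,a}$). So your construction is not only correct but consistent with the way the paper uses the theorem downstream.
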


The Hadamard conjecture famously asserts the existence of Hadamard
matrices for all orders that are divisible by~4.  If true, this would
imply the existence of a complete MOFS$(n)$ for every $n$
divisible by $4$.
Conversely, Theorem~4.6 in~\cite{deL} gives the asymptotic result that
if $n$ is divisible by $4$, then there exists a $k$-MOFS$(n)$ where
$k=n^2(1-o(1))/9$, providing a quadratic lower bound on the size of
the largest set of MOFS$(n)$.  No corresponding bound is known when
$n/2$ is odd. Indeed, very little seems to be known about MOFS$(n)$
when $n\equiv 2\pmod4$. Our primary aim in this paper is to shed some
light on this case. For example, we will show in \sref{s:rel} that
there are no complete MOFS of this type. The corresponding
problem for Latin squares is a famous problem that remains open; it
would imply the non-existence of a finite projective plane of order
$n\equiv 2\pmod4$ with $n>2$.

Maintaining consistency with Latin square
terminology, a \emph{bachelor} frequency square $F$ is one such that
there exists no frequency square $F'$ orthogonal to $F$. 
In general, a set $\{F_1,F_2,\dots,F_k\}$ of $k$-MOFS$(n)$ is said to be
\emph{maximal} if there does not exist a frequency square $F$ that is
orthogonal to $F_i$ for each $1\leq i\leq k$. 
 If we wish to specify that a $k$-MOFS$(n)$ is maximal
we may write $k$-maxMOFS$(n)$.

The structure of the paper is as follows.  In \sref{s:rel} we
demonstrate a condition that is sufficient to show that a set of MOFS
is maximal.  The condition is called a \emph{relation} and is modelled
on similar work that has been done for Latin squares. In \sref{s:bach}
we show that bachelor frequency squares are unique up to isomorphism
for orders that are $2\pmod4$ and do not exist for orders that are
$0\pmod4$. The bachelor frequency squares are maximal because they
satisfy a relation. The contrast with Latin squares is worth noting. It is
known from \cite{Ev,WW06} that bachelor Latin squares exist for all
orders $n>3$.  Moreover, there are vast numbers of bachelor Latin
squares up to isomorphism \cite{CW17}.  In \sref{s:trades} we study
small local changes that can convert a set of MOFS into a
non-isomorphic set of MOFS.  Using these ``trades'' we show that for
any $n$ for which there exists a Hadamard matrix of order $n$ there
are at least $2^{n^2/4-O(n\log n)}$ isomorphism classes of complete
MOFS$(n)$. This contrasts nicely with the result in
\sref{s:rel} that there are no complete MOFS$(n)$ when
$n\equiv2\pmod4$ and $n>2$.  In \sref{s:comp} we report on computer
enumerations for MOFS of small order. We find that aside from the
unique bachelor there are no $k$-maxMOFS$(6)$ with $k<5$.  Also, most
but not all of the $5$-maxMOFS$(6)$ satisfy a relation.  The largest
$k$-MOFS$(6)$ have size $k=17$, and they also satisfy relations. In
\sref{s:embed} we show how $k$-MOFS$(n)$ can sometimes by embedded in
$k$-MOFS$(n')$ for some $n'>n$. Using this technique we show that
there exist $17$-MOFS$(n)$ for all $n\equiv2\pmod4$ such that
$n>2$. Then in \sref{s:max} we use similar ideas to show that there
exist $5$-maxMOFS$(n)$ for all $n\equiv2\pmod4$ such that
$n>2$. Finally, in \sref{s:conc} we discuss some interesting questions
that have been prompted by our work.

\section{Relations}\label{s:rel}

The technique of relations developed in this section is based on the
techniques used in \cite{Do} and \cite{DH} (with origins in \cite{St})
to analyse the maximal sets of mutually orthogonal Latin squares.

A set $\{F_1,\dots,F_k\}$ of $k$-MOFS of order $n$ can be written as an
$n^2\times(k+2)$ orthogonal array $\O$ in which there is a row
\begin{equation}\label{e:rowofOA}
\big[i,j,F_1[i,j],F_2[i,j],\dots,F_k[i,j]\big]\,,
\end{equation}
for each $i \in N(n)$ and $j \in N(n)$. In this context it is safest
to consider MOFS to have an indexing that implies an ordering
on the squares (and hence the order of the columns in $\O$ is
well-defined.  Let $Y_c$ be the set of symbols that occur in column
$c$ of $\O$.  Then a \emph{relation} is a $(k+2)$-tuple
$(X_1,\dots,X_{k+2})$ of sets such that $X_i\subseteq Y_i$ for $1\le
i\le{k+2}$, with the property that every row~\eref{e:rowofOA} of $\O$
has an even number of columns $c$ for which the symbol in column $c$
is an element of $X_c$.  A relation is \emph{trivial on column} $c$ if
$X_c=\emptyset$ or $X_c=Y_c$. We will say that a relation is
\emph{non-trivial} if there is some column on which it is not trivial,
and \emph{full} if it is non-trivial on every column except possibly
one of the first two. We say that a relation is an $(a,b)$-relation if
$|X_1|=a$ and $|X_2|=b$. In the results below we will show certain
restrictions on the kinds of relations that can be achieved.

If we start with a relation and two of the $X_c$'s are replaced by
their complements, then we obtain another relation. In our context,
$X_c\subseteq\{0,1\}$ for $c\ge3$.  By complementing $X_c$ and $X_1$
if necessary, we may assume that $X_c=\{1\}$ or $X_c=\emptyset$ for
all $c \geq 3$. In the latter case, we have a relation on a proper
subset of the MOFS. The choices of $X_1$ and $X_2$ govern which
rows and columns of the MOFS are involved in the relation.  However,
we are only interested in properties of MOFS up to isomorphism.  That
means that only 3 quantities really matter to us for a relation: How
many MOFS are involved, how many rows are involved and how many
columns are involved.  These observations will allow us to rule out
existence of relations in a number of cases below. They also allow us
to provide an easy characterisation of MOFS that satisfy a relation.

\begin{lemma}\label{l:charactrel}
A set of {\rm MOFS} satisfies a non-trivial relation if and only if some
non-empty subset of the {\rm MOFS} have a $\Z_2$-sum that, up to
permutation of the rows and columns, has the following structure of
constant blocks:
\begin{equation}\label{e:blocks}
\left[
\begin{array}{cc}
\mathbf{0}&\mathbf{1}\\
\mathbf{1}&\mathbf{0}
\end{array}
\right].
\end{equation}
\end{lemma}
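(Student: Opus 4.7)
The plan is to reformulate the relation condition as a single $\Z_2$-equation and read the block structure off it directly. Using the normalisation described just before the lemma, I may assume $X_{c+2}\in\{\emptyset,\{1\}\}$ for each $1\le c\le k$. Let $S\subseteq\{1,\dots,k\}$ be the set of $c$ with $X_{c+2}=\{1\}$, and let $G=\bigoplus_{c\in S}F_c$ denote the entrywise $\Z_2$-sum of the squares in question (the empty sum being $\mathbf{0}$). Writing $[P]$ for the indicator of a proposition $P$, the defining parity condition applied to the row \eref{e:rowofOA} is
$$
[i\in X_1]+[j\in X_2]+G[i,j]\equiv 0\pmod 2\qquad\text{for all }i,j\in N(n),
$$
equivalently $G[i,j]\equiv[i\in X_1]+[j\in X_2]\pmod 2$. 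Both directions of the lemma will follow from this single identity.

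For the forward direction, I suppose the relation is non-trivial. I would first rule out $S=\emptyset$: in that case $G\equiv\mathbf{0}$, which forces $[i\in X_1]+[j\in X_2]\equiv 0$ for every $i,j$, and hence each of $X_1,X_2$ must be $\emptyset$ or $N(n)$, making the relation trivial on every column. So $S\ne\emptyset$, and $G[i,j]$ equals $0$ precisely when $i$ and $j$ lie simultaneously inside, or simultaneously outside, the sets $X_1$ and $X_2$ respectively. Permuting rows so that those in $X_1$ appear first and columns so that those in $X_2$ appear first then exhibits $G$ in the form~\eref{e:blocks}, with possibly some of the four blocks empty if $X_1$ or $X_2$ happens to be trivial.

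For the converse, I would begin with a non-empty $S\subseteq\{1,\dots,k\}$ whose $\Z_2$-sum $G=\bigoplus_{c\in S}F_c$ takes the form~\eref{e:blocks} after a row permutation $\pi$ and a column permutation $\sigma$, with top-left block of dimensions $a\times b$. Setting $A=\pi^{-1}(\{1,\dots,a\})$ and $B=\sigma^{-1}(\{1,\dots,b\})$ immediately yields $G[i,j]\equiv[i\in A]+[j\in B]\pmod 2$. I would then define $X_1=A$, $X_2=B$, $X_{c+2}=\{1\}$ for $c\in S$, and $X_{c+2}=\emptyset$ for $c\notin S$. The identity for $G$ confirms that every row of $\O$ contains an even number of entries in their assigned $X_c$, so this is a relation; it is non-trivial because $\{1\}$ is a proper non-empty subset of $Y_{c+2}=\{0,1\}$ for every $c\in S$.

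There is no substantial obstacle beyond setting up the correct bookkeeping: once the defining parity condition is compressed into a single $\Z_2$-equation in $G$, each direction becomes a direct translation between the algebraic form and the constant-block form. The only point requiring care is the normalisation that replaces each $X_{c+2}=\{0\}$ by $\{1\}$ (paired with a complement of $X_1$), which the paper performs before stating the lemma and which is justified by the observation that complementing two of the $X_c$'s preserves the row parity count.
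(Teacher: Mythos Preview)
Your proof is correct and follows essentially the same route as the paper's: both reduce the relation condition (after normalising the $X_{c+2}$'s to lie in $\{\emptyset,\{1\}\}$) to the single $\Z_2$-identity $G[i,j]\equiv[i\in X_1]+[j\in X_2]$ and read off the block structure from it. The paper phrases this as first proving the equivalence for \emph{full} relations and then observing that a non-trivial relation is precisely a full relation on some non-empty subset, whereas you fold that step in via your subset $S$; the content is identical.
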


\begin{proof} 
Let the set of MOFS $\{F_1,\ldots, F_k\}$ satisfy a full relation
$(X_1,\ldots, X_{k+2})$.  For $r,c \in N(n)$, let $x_{rc}$ be the sum
over the entries in the cell $(r,c)$ of each of the squares.  Then, by
the definition of a relation, $x_{rc}\=0\pmod 2$ if $r\in X_1$ and
$c\in X_2$, or if $r\notin X_1$ and $c\notin X_2$, and
$x_{rc}\=1\pmod{2}$ if exactly one of $r\in X_1$ and $c\in X_2$ holds.
By permuting the rows and columns appropriately, the
$\mathbb{Z}_2$-sum of $F_1,\ldots, F_k$ has a structure of the form in~\eref{e:blocks}.

If the set of MOFS $\{ F_1,\ldots, F_k\}$ has a $\mathbb{Z}_2$-sum
that is, up to permutation of the rows and columns, of the form
\eref{e:blocks}, then let $X_1$ be the set of rows and $X_2$ be the
set of columns corresponding to the upper left $\mathbf{0}$ block
in~\eref{e:blocks}.  Then it is easy to check that $(X_1,X_2,\ldots,
X_{k+2})$ with $ X_{c} = \{ 1\}$ for $c \geq 3$ is a full relation on
$\{ F_1,\ldots, F_k\}$.

Thus, we have shown that a set of MOFS $\{F_1,\ldots, F_k\}$ satisfy
a full relation if and only if the $\mathbb{Z}_2$-sum of $F_1,\ldots,F_k$ 
is of the form \eref{e:blocks} up to permutation of the rows and columns.
As a set of MOFS satisfies a non-trivial relation if and only if 
a non-empty subset satisfies a full relation, the lemma follows.
\end{proof}

We stress that the blocks in~\eref{e:blocks} are allowed to be degenerate. 
For example, here are 2-MOFS(4) that satisfy a relation with 
$X_1=\{1,2\}$ and $X_2=\emptyset$. Their $\Z_2$-sum is also given.
\[
\left[
\begin{array}{cccc}
0&1&0&1\\
1&0&1&0\\
0&1&0&1\\
1&0&1&0
\end{array}
\right]
+
\left[
\begin{array}{cccc}
1&0&1&0\\
0&1&0&1\\
0&1&0&1\\
1&0&1&0
\end{array}
\right]
\equiv
\left[
\begin{array}{cccc}
1&1&1&1\\
1&1&1&1\\
0&0&0&0\\
0&0&0&0
\end{array}
\right].
\]

\begin{lemma}\label{l:parityrel}
Let $\F$ be a $k$-{\rm MOFS}$(2\lambda)$ and let
$R=(X_1,\dots,X_{k+2})$ be a full relation on $\F$.
Then $|X_1|\=|X_2|\=\lambda k\pmod{2}$.
\end{lemma}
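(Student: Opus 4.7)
My plan is to recast the relation $R$ as a parity statement about the $\Z_2$-sum $S := F_1 + \dots + F_k$ and then compare row sums of $S$ in two different ways. First, since $R$ is full, each $X_c$ with $c \geq 3$ is a proper nonempty subset of $\{0,1\}$, so by the paired-complementation trick described just before \lref{l:charactrel} I may assume $X_c = \{1\}$ for every $c \geq 3$. This normalization may require complementing $X_1$, which replaces $|X_1|$ by $2\lambda - |X_1|$ and hence preserves $|X_1| \bmod 2$ because $2\lambda$ is even; the same remark applies to $X_2$. The relation condition then reads: for every cell $(i,j)$,
\[
\mathbf{1}[i \in X_1] + \mathbf{1}[j \in X_2] + \sum_{c=1}^{k} F_c[i,j] \equiv 0 \pmod 2,
\]
which is exactly the assertion, via \lref{l:charactrel}, that $S \bmod 2$ has the block structure in \eref{e:blocks}, with $X_1$ indexing the first block row and $X_2$ the first block column.

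Next, I would count each row of $S$ in two ways. Since every $F_c$ is of type $F(2\lambda;\lambda)$, each row of each $F_c$ contains exactly $\lambda$ ones, so the integer sum over row $i$ of $S$ is $k\lambda$. On the other hand, the parity of that row sum equals the number of odd entries in row $i$ of $S$, and the block structure shows this number is $2\lambda-|X_2|$ when $i \in X_1$ and $|X_2|$ when $i \notin X_1$; in either case, because $2\lambda$ is even, the parity is $|X_2| \bmod 2$. Equating the two modulo $2$ yields $|X_2| \equiv k\lambda \pmod 2$. The analogous argument using column sums of $S$ gives $|X_1| \equiv k\lambda \pmod 2$, which is the conclusion.

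The main (and really only) subtle point is the bookkeeping in the normalization step: one must check carefully that the WLOG reduction to $X_c = \{1\}$ for $c \geq 3$ can be performed without changing $|X_1| \bmod 2$ or $|X_2| \bmod 2$. Once that is in hand, the rest is a very clean parity count and no separate case analysis is required for the degenerate possibilities $X_1 \in \{\emptyset, N(2\lambda)\}$ or $X_2 \in \{\emptyset, N(2\lambda)\}$, since the two row-sum formulas $2\lambda - |X_2|$ and $|X_2|$ coincide modulo $2$ and the asserted congruence $0 \equiv 2\lambda \equiv k\lambda \pmod 2$ is then forced automatically.
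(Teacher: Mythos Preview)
Your proof is correct and takes essentially the same approach as the paper: both normalize to $X_c=\{1\}$ for $c\ge3$ and then sum the relation over a single row (and column) to obtain $|X_2|+\lambda k\equiv0\pmod2$. The only cosmetic difference is that the paper counts directly in the orthogonal array restricted to the rows with $i=1$, whereas you route the same count through the block description of the $\Z_2$-sum from \lref{l:charactrel}; the underlying parity computation is identical.
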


\begin{proof}
Let $\O$ be the orthogonal array corresponding to $\F$.
Let 
$$s = \big|\{(r,c)\in N(4\lambda^2)\times N(k+2):\, \O[r,1]=1, \O[r,c] \in X_c\}\big|$$ 
be the number of cells containing symbols in the relation 
among the rows of $\O$ for which $i=1$ in \eref{e:rowofOA}.
By noting that $X_c = \{1 \}$ for $c\geq 3$, one finds that 
$s=|X_2|+\lambda k+2\lambda\delta$
where $\delta=1$ if $1\in X_1$ and $\delta=0$ otherwise.
By the definition of relations, it follows that 
$|X_2|+\lambda k$ is even. A similar argument on the first column of the MOFS
shows that $|X_1|+\lambda k$ is even. 
\end{proof}

One reason to be interested in relations is that they can be used
to diagnose maximality of a set of MOFS.

\begin{theorem}\label{t:relmax}
Suppose $k$ and $\lambda$ are both odd. Let $\F$ be a 
$k$-{\rm MOFS}$(2\lambda)$ that satisfies a full relation. Then $\F$ is
maximal.
\end{theorem}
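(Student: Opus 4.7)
The plan is to argue by contradiction: assume there exists a frequency square $F$ orthogonal to every $F_l$ in $\F$, and derive a parity impossibility from the interaction between $F$ and the full relation.

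First I would normalise the relation as in the paragraph preceding Lemma~\ref{l:charactrel} so that $X_c=\{1\}$ for every $c\ge3$. The defining condition of a relation then becomes the pointwise identity
\[
\sum_{l=1}^{k}F_l[i,j]\equiv[i\in X_1]+[j\in X_2]\pmod2\qquad\text{for all }(i,j)\in N(2\lambda)\times N(2\lambda).
\]
Lemma~\ref{l:parityrel} gives $|X_1|\equiv|X_2|\equiv k\lambda\pmod2$, and since $k$ and $\lambda$ are both odd this forces $|X_1|$ and $|X_2|$ to be odd.

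The key step will be to evaluate
\[
S=\sum_{(i,j):\,F[i,j]=1}\sum_{l=1}^{k}F_l[i,j]
\]
in two ways. Since $F$ is orthogonal to $F_l$, the pair $(1,1)$ appears exactly $\lambda^2$ times when $F$ and $F_l$ are superimposed, so the inner sum over cells where $F$ is $1$ equals $\lambda^2$ for each $l$; summing over $l$ gives $S=k\lambda^2$, which is odd. Reducing $S$ modulo $2$ via the pointwise identity above, and using that $F$ has exactly $\lambda$ ones in each row and each column, I would instead obtain
\[
S\equiv\sum_{(i,j):\,F[i,j]=1}\bigl([i\in X_1]+[j\in X_2]\bigr)=\lambda\bigl(|X_1|+|X_2|\bigr)\pmod2.
\]
Since $|X_1|+|X_2|$ is a sum of two odd integers it is even, so this right-hand side is even, contradicting the oddness of $S$. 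Hence no such $F$ exists and $\F$ is maximal.

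The only genuinely nontrivial step is identifying the right weighted sum $S$ to double-count; once $S$ is in hand, the rest is bookkeeping combined with the parity information from Lemma~\ref{l:parityrel}, so I do not anticipate any further obstacle.
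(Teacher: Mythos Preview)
Your proof is correct and follows essentially the same approach as the paper. Both arguments double-count the quantity $S=\sum_{F[i,j]=1}\sum_l F_l[i,j]$: orthogonality gives $S=k\lambda^2$ (odd), while the relation forces $S\equiv\lambda(|X_1|+|X_2|)\pmod2$ (even), yielding the contradiction. The paper phrases the second count by splitting cells with $F[i,j]=1$ into four blocks according to membership in $X_1,X_2$ and introducing an auxiliary count $x$, whereas your use of the pointwise identity and Iverson brackets is a slightly cleaner packaging of the same computation.
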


\begin{proof}
Let $R =(X_1,\ldots, X_{k+2})$ be a full relation of $\F$. 
By \lref{l:parityrel}, we know that $|X_1|$ and $|X_2|$ are both odd.
Suppose that $\F$ can be extended by appending a square $F$. Let $F$ have
$x$ ones in cells $(r,c)$ where $r\in X_1$ and $c\in X_2$. Then $F$
has $\lambda|X_1|-x$ ones in cells $(r,c)$ where $r\in X_1$ and
$c\notin X_2$ and $\lambda|X_2|-x$ ones in cells $(r,c)$ where
$r\notin X_1$ and $c\in X_2$. 
By orthogonality, the number of pairs $(F[r,c],F_i[r,c])$ such that
$(F[r,c],F_i[r,c])=(1,1)$ is $k \lambda^2$.  On the other hand, for a
fixed pair $(r,c)$ with $F[r,c] =1$ the number of $i$'s such that
$(F[r,c],F_i[r,c])=1$ is even if either $r\in X_1$ and $c\in X_2$ or
$r\not\in X_1$ and $c\not\in X_2$, and is odd if exactly one of $r\in
X_1$ and $c\in X_2$ holds.
Therefore, we must have that
\[
1\=k\lambda^2\=\lambda|X_1|-x+\lambda|X_2|-x\=0\pmod{2}.
\]
This contradiction proves the theorem.
\end{proof}

With~\tref{t:relmax} as motivation, we now consider what relations
are possible.  Our first result is a non-existence result.

\begin{theorem}\label{t:no2or3mod4}
  Suppose that $\lambda$ is odd and $k\not\=1\pmod{4}$. 
  Then no $k$-{\rm MOFS}$(2\lambda)$ satisfies a full relation.
\end{theorem}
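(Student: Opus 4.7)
The plan is to exploit pairwise orthogonality through a single integer moment and compare it against the parity structure guaranteed by Lemmas~\ref{l:charactrel} and~\ref{l:parityrel}. Let $S=F_1+F_2+\cdots+F_k$, regarded as an integer matrix. Because each $F_i$ is $0/1$-valued, $S[r,s]^2=\sum_i F_i[r,s]+2\sum_{i<j}F_i[r,s]F_j[r,s]$; summing over $(r,s)$ and using $|F_i|_1=2\lambda^2$ together with orthogonality $\sum_{r,s}F_iF_j=\lambda^2$ for $i\ne j$ yields
\[
\sum_{r,s}S[r,s]^2 \;=\; 2\lambda^2 k+2\binom{k}{2}\lambda^2 \;=\; \lambda^2 k(k+1).
\]

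Next, invoke Lemma~\ref{l:charactrel} applied to the full relation: up to a permutation of rows and columns, $S\equiv M\pmod 2$ where $M$ has constant blocks $\bigl(\begin{smallmatrix}\mathbf0&\mathbf1\\\mathbf1&\mathbf0\end{smallmatrix}\bigr)$ with row split $|X_1|,2\lambda-|X_1|$ and column split $|X_2|,2\lambda-|X_2|$. Hence $S[r,s]$ is even on $X_1\times X_2$ and $\overline{X_1}\times\overline{X_2}$, and odd on the two off-diagonal blocks. Since even squares are divisible by $4$ and odd squares satisfy $x^2\equiv1\pmod4$,
\[
\lambda^2 k(k+1)\;\equiv\; N_{\mathrm{odd}}\;=\;2\lambda(|X_1|+|X_2|)-2|X_1||X_2|\pmod 4.
\]
Lemma~\ref{l:parityrel} (with $\lambda$ odd) forces $|X_1|\equiv|X_2|\equiv k\pmod 2$, so a case analysis on $k\pmod 4$ is now available. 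For $k\equiv2\pmod4$, both $|X_1|,|X_2|$ are even, making $N_{\mathrm{odd}}\equiv0\pmod4$ while $\lambda^2 k(k+1)\equiv2\pmod4$. For $k\equiv3\pmod4$, both are odd, giving $N_{\mathrm{odd}}\equiv -2|X_1||X_2|\equiv2\pmod4$ whereas $\lambda^2 k(k+1)\equiv0\pmod4$. In each case the identity fails, contradicting the existence of the full relation.

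The case $k\equiv0\pmod 4$ is the main obstacle: both sides are automatically $\equiv0\pmod4$, so the mod-$4$ argument is inconclusive. I would try to close this case by sharpening to a mod-$8$ congruence, noting that $x^2\equiv0$ or $4\pmod 8$ according to whether $x\equiv0$ or $2\pmod 4$, and that $x^2\equiv1\pmod8$ for odd $x$. Writing $k=4m$ and $|X_1|=2a$, $|X_2|=2b$, the mod-$8$ version of the identity rearranges to $a+b+Z\equiv m\pmod2$, where $Z$ counts cells in the even blocks with $S\equiv2\pmod4$. I would combine this with the symmetric information obtained from the analogous row/column splitting in Lemma~\ref{l:parityrel} — in particular, computing $\sum_r w(r)^2\pmod 8$ where $w(r)$ is the relation-weight of row $r$ of the orthogonal array $\O$ — to pin down $Z\pmod 2$ independently and produce an incompatibility. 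Getting these two determinations of $Z$ to disagree is the technical heart of the argument, and is where I would expect to spend the bulk of the effort.
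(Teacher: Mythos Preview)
Your treatment of $k\equiv2,3\pmod4$ is essentially the paper's own argument, just packaged differently: your identity $\sum_{r,s}S[r,s]^2=\lambda^2k(k+1)$ is exactly the combination $\sum_i i^2x_i=\sum_i ix_i+2\sum_i\binom{i}{2}x_i$ that the paper computes, and your $N_{\mathrm{odd}}$ is the paper's $\sum_{\text{odd }i}x_i$. So for those two residue classes your proof is complete and matches the paper.

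The genuine gap is the case $k\equiv0\pmod4$, which you explicitly leave open. Your proposed mod-$8$ refinement reduces to determining the parity of $Z=|\{(r,s):S[r,s]\equiv2\pmod4\}|$, and you suggest extracting this from a second moment over rows of the orthogonal array. But it is not clear that any such symmetric moment actually sees $Z$ rather than just $N_{\mathrm{odd}}$ again; the quantity $Z$ depends on how the entries of the individual $F_i$ correlate in fours, not in pairs, and pairwise orthogonality gives no direct handle on that. I do not see how to make your sketch converge.

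The paper closes the even-$k$ case (so in particular $k\equiv0\pmod4$) by a different and much shorter device that you should adopt: instead of working with the full sum $S$, single out one square $F_1$ and restrict attention to the cells where $F_1=1$. Partition those $2\lambda^2$ cells into $\Omega_0$ (where $S$ is even) and $\Omega_1$ (where $S$ is odd). Because $|X_1|,|X_2|$ are even, one checks directly that $|\Omega_1|$, and hence $|\Omega_0|$, is even. Now count the number $p$ of coincidences of ones between $F_1$ and the remaining $k-1$ squares: on the one hand $p=(k-1)\lambda^2$ is odd; on the other hand each cell of $\Omega_0$ contributes an odd amount and each cell of $\Omega_1$ an even amount, so $p\equiv|\Omega_0|\equiv0\pmod2$. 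This contradiction finishes the even case without any mod-$8$ bookkeeping.
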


\begin{proof}
Let $\{F_1,\ldots, F_k \}$ be a $k$-MOFS$(2 \lambda)$ that
satisfies a full relation $(X_1,\ldots, X_{k+2})$.  Let $x_i$ be the
number of $k$-tuples in the superposition of the $k$-MOFS$(2\lambda)$
that contain exactly $i$ ones. Since each square contains $2\lambda^2$
ones, we know that
\begin{equation}\label{e:ixi}
    2k\lambda^2=\sum_{i=0}^k ix_i.
\end{equation}
Also each pair of squares has $\lambda^2$ cells where both contain a one, so
\begin{equation}\label{e:ic2xi}
    \binom{k}{2}\lambda^2=\sum_{i=0}^k \binom{i}{2}x_i.
\end{equation}
Adding twice~\eref{e:ic2xi} to~\eref{e:ixi} we find that
\begin{equation}\label{e:oddxi}
  k(k+1)\=k(k+1)\lambda^2=  \sum_{i=0}^k i^2x_i \=  \sum_{\text{odd }i}x_i\pmod{4}.
\end{equation}
However, by Lemma~\ref{l:parityrel}, the existence of the relation enforces
\begin{equation}\label{e:oddxialt}
  \sum_{\text{odd }i}x_i=|X_1|(2\lambda-|X_2|)+|X_2|(2\lambda-|X_1|)
  =2\lambda(|X_1|+|X_2|)-2|X_1||X_2|\=2k^2\pmod{4}.
\end{equation}
  Equations (\ref{e:oddxi}) and
(\ref{e:oddxialt}) contradict each other when $k\=2$ or $3\pmod{4}$.
  
  \bigskip

Next we rule out the case when $k$ is even (thereby finding a second
proof for the case $k\=2\pmod{4}$). So assume that $k$ is even, and,
hence, $|X_1|$ and $|X_2|$ are both even, by \lref{l:parityrel}.

Let $\Omega_0$ (respectively $\Omega_1$) be the set of cells $(r,c)$
for which $F_1[r,c]=1$ and in which the superposition of $F_1,\ldots,F_{k}$
has an even (respectively, odd) number of ones. We claim that
$|\Omega_1|$ is even, since it can be obtained by counting the (even)
number of ones in the rows of $F_1$ with indices in $X_1$, adding the
(even) number of ones in the columns of $F_1$ with indices in $X_2$,
and subtracting twice the number of ones in the intersection. As
$|\Omega_0|+|\Omega_1|=2 \lambda^2$, it follows that $|\Omega_0|$ is
also even.

Now let $p$ be the total number of pairs of ones in the
superposition of $F_1$ with the other $(k-1)$ squares. Each square
contributes $\lambda^2$ to $p$, so $p=(k-1)\lambda^2\=1\pmod{2}$.
However, each cell in $\Omega_0$ contributes an odd number of times to
$p$ and each cell in $\Omega_1$ contributes an even number, showing
that $p\=|\Omega_0|\=0\pmod{2}$. This contradiction completes the proof.
\end{proof}

In particular,~\tref{t:no2or3mod4} shows that $k=5$ is the
smallest $k$ for which~\tref{t:relmax} says anything, aside from the
fairly trivial case of $k=1$. This will be significant later, but for the
moment we just give an example to show that a relation can be achieved
when $k=5$. Consider the following 5-MOFS$(6)$, shown superimposed on the
left. Their $\Z_2$-sum is shown on the right, demonstrating that they
satisfy a $(5,3)$-relation and hence are maximal.
\begin{equation}\label{e:5maxMOFSrel}
\left[
\begin{array}{cccccc}
11011&10111&01100&00001&00010&11100\\
10100&01111&11011&00010&11100&00001\\
01111&11000&10111&11100&00001&00010\\
01001&10001&00101&10110&01110&11010\\
10010&00110&01010&01101&11001&10101\\
00100&01000&10000&11011&10111&01111
\end{array}
\right]
\qquad\qquad
\left[
\begin{array}{cccccc}
0&0&0&1&1&1\\
0&0&0&1&1&1\\
0&0&0&1&1&1\\
0&0&0&1&1&1\\
0&0&0&1&1&1\\
1&1&1&0&0&0
\end{array}
\right]
\end{equation}

We next show that even in the case when $k\=1\pmod{4}$ there is another
restriction on what relations are possible.

\begin{theorem}\label{t:restrictrel}
Let $\lambda$ be odd. Suppose that there exists $k$-{\rm MOFS}$(2\lambda)$ 
with a full relation $R=(X_1,\ldots, X_{k+2})$.
If $k\=1\pmod 8$, then $|X_1||X_2|=1\pmod{4}$ and
if $k\=5\pmod 8$, then $|X_1||X_2|=3\pmod{4}$.
\end{theorem}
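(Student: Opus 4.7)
The plan is to refine the mod-$4$ argument of \tref{t:no2or3mod4} to mod $8$, the new ingredient being a parity calculation that exploits the fact that the $\mathbb{Z}$-superposition $s_{r,c}:=\sum_{j=1}^{k}F_j[r,c]$ is automatically even on $X_1\times X_2$.

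By \lref{l:parityrel}, both $|X_1|$ and $|X_2|$ are odd (since $\lambda k$ is odd), so $|X_1||X_2|$ is odd and $m:=(|X_1|+|X_2|)/2$ is an integer. Let $A=\sum_{i\equiv 2\pmod 4}x_i$ count the cells of the superposition with $s_{r,c}\equiv 2\pmod 4$. Because $\lambda^2\equiv 1\pmod 8$, $i^2\equiv 1\pmod 8$ for odd $i$, and $i^2\equiv 4\pmod 8$ exactly when $i\equiv 2\pmod 4$, the identity $\sum_i i^2 x_i=k(k+1)\lambda^2$ from the proof of \tref{t:no2or3mod4} upgrades modulo $8$ to $\sum_{\text{odd }i}x_i\equiv k(k+1)-4A\pmod 8$. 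From the relation, as in \eref{e:oddxialt}, $\sum_{\text{odd }i}x_i=4\lambda m-2|X_1||X_2|\equiv 4m-2|X_1||X_2|\pmod 8$, so equating gives
\[2|X_1||X_2|\equiv 4(m+A)-k(k+1)\pmod 8.\]

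The main step, and the only non-routine one, is to show that $m+A$ is odd. I would do this by double-counting $\Sigma_A:=\sum_{(r,c)\in X_1\times X_2}s_{r,c}$. On the one hand every summand is even by the relation, so $\Sigma_A$ is even. On the other hand, writing $\Sigma_B,\Sigma_C,\Sigma_D$ for the analogous sums over the three other blocks of the $(X_1,X_2)$-partition and using that each $F_j$ has all row and column sums equal to $\lambda$, the identities $\Sigma_A+\Sigma_B=k\lambda|X_1|$, $\Sigma_A+\Sigma_C=k\lambda|X_2|$, and $\Sigma_A+\Sigma_B+\Sigma_C+\Sigma_D=2k\lambda^2$ give $\Sigma_A+\Sigma_D=2\Sigma_A+2k\lambda(\lambda-m)$. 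On the even-parity cells $s_{r,c}$ is $0$ or $2\pmod 4$, so $\Sigma_A+\Sigma_D\equiv 2A\pmod 4$. Dividing by $2$ and using that $\Sigma_A$ is even while $k\lambda$ and $\lambda$ are odd gives $A\equiv\lambda-m\equiv m+1\pmod 2$, as required.

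Finally, $m+A$ odd gives $4(m+A)\equiv 4\pmod 8$. A direct check shows $k(k+1)\equiv 2\pmod 8$ when $k\equiv 1\pmod 8$ and $k(k+1)\equiv 6\pmod 8$ when $k\equiv 5\pmod 8$. Substituting into the displayed congruence yields $2|X_1||X_2|\equiv 2\pmod 8$ or $2|X_1||X_2|\equiv 6\pmod 8$, that is, $|X_1||X_2|\equiv 1\pmod 4$ or $|X_1||X_2|\equiv 3\pmod 4$ respectively, as claimed. The conceptual obstacle is locating the right parity invariant; once one pairs the relation-forced evenness of $\Sigma_A$ with the row/column-sum identity, the rest is bookkeeping modulo~$8$.
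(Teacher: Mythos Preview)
Your proof is correct and follows essentially the same strategy as the paper: both refine the mod-$4$ counting of \tref{t:no2or3mod4} to mod $8$, with the extra parity input being that $\alpha=\Sigma_A=\sum_{(r,c)\in X_1\times X_2}s_{r,c}$ is even. The only organisational difference is that you track $A=\sum_{i\equiv2\pmod4}x_i$ via the block sum $\Sigma_A+\Sigma_D\equiv2A\pmod4$, whereas the paper tracks $\sum_{i\equiv3\pmod4}x_i$ via $\sum_{\text{odd }i}ix_i$ and $\sum_{\text{odd }i}(i-1)^2x_i$; these are dual bookkeeping choices that lead to the same final congruence.
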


\begin{proof}
Let $R=(X_1,\ldots, X_{k+2})$ be a full relation on a
$k$-MOFS$(2\lambda)$.  By \lref{l:parityrel}, $|X_1|$ and $|X_2|$ are
both odd.
Let $x_i$ be the number of $k$-tuples in the superposition
of the $k$-MOFS$(2\lambda)$ that contain exactly $i$ ones.  Let
$\alpha$ be the number of ones in the superposition of the
$k$-MOFS$(2\lambda)$ that lie in some position $(r,c)$ where $r \in X_1$
and $c \in X_2$.  As $R$ is a relation, $\alpha$ must be even.  Then
the total number of ones in the superposition of the
$k$-MOFS$(2\lambda)$ that lie in a position $(r,c)$ such that exactly
one of $r \in X_1$ and $c \in X_2$ is true is
\begin{equation}\label{e:oddixi}
\sum_{\text{odd }i}ix_i = \lambda k|X_1|-\alpha + \lambda k|X_2|-\alpha \= \lambda k(|X_1|+|X_2|)\pmod{4}.
\end{equation}
Note that equations 
(\ref{e:ixi}), (\ref{e:ic2xi}) and (\ref{e:oddxialt})
from the proof of~\tref{t:no2or3mod4} only depend on $\lambda$ being odd 
and so each of them are still valid in the current setting.
From~\eref{e:oddixi} and~\eref{e:oddxialt} we have
\begin{equation}\label{e:sumimod3}
\sum_{i \equiv 3 \text{\,(mod\,4)}}2x_i 
 \= \sum_{\text{odd }i}(i-1)x_i 
 \= \lambda (k-2)(|X_1|+|X_2|)+2|X_1||X_2|\pmod{4}.
\end{equation}
By simplifying 2$\times$(\ref{e:ic2xi})+(\ref{e:oddxialt})-(\ref{e:ixi}), 
one finds that
\begin{align}
k(k-3)\lambda^2 +2\lambda(|X_1|+|X_2|)-2|X_1||X_2|
&=\sum_{i=0}^{k}i(i-2)x_i + \sum_{\text{odd }i}x_i\nonumber\\
&\=\sum_{\text{odd }i}(i-1)^2x_i
\= \sum_{i \equiv 3\text{\,(mod\,4)}}4 x_i \pmod{8}.\label{e:sumimod3alt}
\end{align}
By setting $2\times$\eref{e:sumimod3}$=$\eref{e:sumimod3alt} 
and noting that
$\lambda$ is odd, we see that
\begin{equation}\label{e:lststp}
k(k-3)+2\lambda (3-k)(|X_1|+|X_2|)-6|X_1||X_2|\=0\pmod8.
\end{equation}
Since $k$ is odd, we see that $2\lambda(3-k)(|X_1|+|X_2|)\=0\pmod8$.
Thus if $k\=1\pmod8$, then we must have $|X_1||X_2|\=1\pmod{4}$
and  if $k\=5\pmod8$, then we must have $|X_1||X_2|\=3\pmod{4}$, by~\eref{e:lststp}.
\end{proof}

Having shown some restrictions on which relations are possible, our
next goal is to show that certain relations are actually
achievable. We first give a lemma characterising triples of MOFS.

\begin{lemma}\label{l:bsictriple}
For each triple $t$, let $x_t$ count the number of cells where $t$
occurs in the superimposition of three frequency squares $F_1$, $F_2$
and $F_3$ of order $2\lambda$. Then $F_1$, $F_2$ and $F_3$ are
orthogonal if and only if
\begin{equation}\label{e:tripMOFS}
  x_{000}=x_{011}=x_{101}=x_{110}\mbox{ \ and \ }
  x_{001}=x_{010}=x_{100}=x_{111}.
\end{equation}
\end{lemma}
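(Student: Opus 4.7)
The plan is to treat the eight quantities $x_t$ for $t\in\{0,1\}^3$ as unknowns and analyse the linear system imposed on them by pairwise orthogonality. Orthogonality of $F_i$ and $F_j$ says that for each $(a,b)\in\{0,1\}^2$ the number of cells where $(F_i[r,c],F_j[r,c])=(a,b)$ equals $\lambda^2$. Summing $x_{abc}$ over the missing index $c$ (or $b$, or $a$) in each case, we obtain, for each of the three pairs of squares, four equations; together with the total cell count $\sum_t x_t=4\lambda^2$, this is the full set of constraints.

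For the forward implication, I would subtract pairs of these orthogonality equations to get the key equalities. For example, $x_{000}+x_{001}=\lambda^2$ from $(F_1,F_2)$-orthogonality and $x_{000}+x_{010}=\lambda^2$ from $(F_1,F_3)$-orthogonality give $x_{001}=x_{010}$; comparing with $x_{000}+x_{100}=\lambda^2$ from $(F_2,F_3)$-orthogonality gives $x_{001}=x_{010}=x_{100}$. Analogous subtractions applied to the equations counting $(1,1)$-pairs yield $x_{011}=x_{101}=x_{110}$. Writing $p$ for the common value of the first three and $q$ for the common value of the second three, the identity $\sum_t x_t=4\lambda^2$ combined with $x_{000}+x_{001}=\lambda^2$ (which reads $x_{000}+p=\lambda^2$) and the analogous $x_{110}+x_{111}=\lambda^2$ forces $p+q=\lambda^2$, and hence $x_{000}=q$ and $x_{111}=p$, completing the forward direction.

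For the converse, suppose the two blocks of four equalities hold; call the common values $q$ and $p$ respectively. Since every cell contributes to exactly one $x_t$, we have $4p+4q=\sum_t x_t=4\lambda^2$, so $p+q=\lambda^2$. Then for any pair of indices $i<j$ and any $(a,b)\in\{0,1\}^2$, the number of cells where $(F_i,F_j)$ takes the value $(a,b)$ is a sum of two of the $x_t$, exactly one from each block, and so equals $p+q=\lambda^2$. Therefore every pair of squares is orthogonal.

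There isn't really a hard step: the argument is a short piece of bookkeeping in a small linear system. The only mild subtlety is recognising that the pairwise equations alone do not immediately pin down $x_{000}$ and $x_{111}$ and that one must feed in the global count $\sum_t x_t=4\lambda^2$ (equivalently, the fact that each $F_i$ has $2\lambda^2$ ones) to close the argument.
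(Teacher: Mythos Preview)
Your argument is correct and follows the same approach as the paper: write out the twelve pairwise-orthogonality equations $x_{ab0}+x_{ab1}=x_{a0b}+x_{a1b}=x_{0ab}+x_{1ab}=\lambda^2$ and observe that this linear system is equivalent to \eqref{e:tripMOFS}. One minor remark: contrary to your closing comment, the global count $\sum_t x_t=4\lambda^2$ is not actually needed in the forward direction, since once $x_{001}=x_{010}=x_{100}=p$ and $x_{011}=x_{101}=x_{110}=q$ are established, the orthogonality equation $x_{010}+x_{011}=\lambda^2$ already gives $p+q=\lambda^2$, whence $x_{000}=\lambda^2-p=q$ and $x_{111}=\lambda^2-q=p$.
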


\begin{proof}
  
  Orthogonality requires that
\begin{align*}
\lambda^2
&=x_{000}+x_{001}=x_{010}+x_{011}=x_{100}+x_{101}=x_{110}+x_{111}\\
&=x_{000}+x_{010}=x_{001}+x_{011}=x_{100}+x_{110}=x_{101}+x_{111}\\
&=x_{000}+x_{100}=x_{001}+x_{101}=x_{010}+x_{110}=x_{011}+x_{111},
\end{align*}
which is equivalent to \eref{e:tripMOFS}.
\end{proof}

We say that two
relations $(X_1,\dots,X_{k+2})$ and $(X'_1,\dots,X'_{k+2})$
are \emph{equivalent} if $|X_i|=|X'_i|$ for $1\le i\le k+2$.

\begin{theorem}\label{t:smallkrel}
Let $\Lambda=\{1,2,\dots,\lambda\}$. For $k\le3$ the following
relations are achieved, and every relation that is achieved is
equivalent to one of these:
\begin{itemize}
\item $k=1$: $X_1=X_2=\Lambda$.
\item
$k=2$: each of $\lambda$, $|X_1|$ and $|X_2|$ is even and either
$X_1=\Lambda$ or $X_2=\Lambda$.
\item
$k=3$: each of $\lambda$, $|X_1|$ and $|X_2|$ is even.
\end{itemize}
\end{theorem}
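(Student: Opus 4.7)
The plan is to split the proof into a necessity half (every full relation on a $k$-MOFS$(2\lambda)$ has parameters of the listed form) and a sufficiency half (every listed choice of parameters is realised by some $k$-MOFS$(2\lambda)$).

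For necessity I would apply \lref{l:charactrel} to translate the existence of a full relation on $\{F_1,\ldots,F_k\}$ into the statement that the $\Z_2$-sum $S:=F_1+\cdots+F_k\pmod 2$ has, up to a permutation of rows and columns, the block form \eref{e:blocks} with NW $\mathbf 0$-block of dimensions $|X_1|\times|X_2|$. When $k=1$, $S=F_1$ is itself a frequency square, so a row in $X_1$ must have $\lambda$ ones; the block form yields $2\lambda-|X_2|$ ones per such row, forcing $|X_2|=\lambda$, and symmetrically $|X_1|=\lambda$. When $k\in\{2,3\}$, \tref{t:no2or3mod4} (since $k\not\=1\pmod 4$) forces $\lambda$ to be even, after which \lref{l:parityrel} gives $|X_1|\=|X_2|\=0\pmod 2$. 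For $k=2$ I would complete the argument with one additional count: the total number of ones in $S$ equals the Hamming distance between $F_1$ and $F_2$, which by orthogonality is $2\lambda^2$. Reading the same total off the block form gives $2\lambda(|X_1|+|X_2|)-2|X_1||X_2|=2\lambda^2$, which factors as $(\lambda-|X_1|)(\lambda-|X_2|)=0$, so $|X_1|=\lambda$ or $|X_2|=\lambda$. For $k=3$ the analogous count, using \lref{l:bsictriple}, yields a consistency equation that is automatically satisfied for every even choice of $|X_1|,|X_2|\in\{0,2,\ldots,2\lambda\}$, so no further restriction arises.

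For sufficiency I would exhibit direct constructions. For $k=1$ I would take $F_1$ with $\lambda\times\lambda$ all-zero blocks in the NW and SE quadrants and all-one blocks in the NE and SW quadrants; this is a frequency square already in the block form \eref{e:blocks}. For $k=2$ with $\lambda$ even, $|X_1|=\lambda$, and $b:=|X_2|$ even, I would build $F_1$ as a frequency square having $b/2$ ones in each row restricted to the $X_2$-columns and $\lambda/2$ ones in each column restricted to the $X_1$-rows, realised by tiling each of its four quadrants with $2\times 2$ identity blocks (possible because $\lambda$ and $b$ are both even). Setting $F_2:=F_1\oplus S$ then gives a second frequency square, and the prescribed agree/disagree pattern together with the balance conditions immediately yield orthogonality. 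For $k=3$ with $\lambda,|X_1|,|X_2|$ all even, I would combine the $k=2$ construction with a carefully chosen third square $F_3$: starting from 2-MOFS whose $\Z_2$-sum is some chosen intermediate pattern $T$, set $F_3:=S\oplus T$ and arrange the tilings of $F_1,F_2$ so that $F_3$ is itself a frequency square and is orthogonal to each of $F_1,F_2$. The required extra balance conditions reduce again to linear constraints on how identity and anti-identity $2\times 2$ blocks are arranged within each of the four quadrants, and pairwise orthogonality of the resulting triple is then confirmed via \lref{l:bsictriple}. The hard part will be this last simultaneous-orthogonality construction for $k=3$; the necessity direction, by contrast, amounts to bookkeeping with the lemmas already proved and a single row-sum count on the $\Z_2$-sum.
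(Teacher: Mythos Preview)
Your necessity argument is correct and matches the paper's in substance: the $k=1$ row/column count, the $k=2$ Hamming-distance count (which is exactly the paper's $(2\lambda-|X_1|)|X_2|+(2\lambda-|X_2|)|X_1|=2\lambda^2$ rewritten), and the appeal to \tref{t:no2or3mod4} and \lref{l:parityrel} for $k\in\{2,3\}$ are all the same steps the paper takes. Your sufficiency constructions for $k=1$ and $k=2$ are also fine; the $k=2$ construction is a different packaging of the same idea (the paper writes the superposition directly as a $4\times4$ block array, you build $F_1$ by $2\times2$ tiles and set $F_2=F_1\oplus S$), but both work.

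The genuine gap is the $k=3$ sufficiency. What you have written is a strategy, not a construction: ``arrange the tilings of $F_1,F_2$ so that $F_3$ is itself a frequency square and is orthogonal to each of $F_1,F_2$'' is precisely the thing to be proved, and the simultaneous constraints do not fall out of the $2\times2$-identity-tile idea alone. The paper's proof here is an explicit block design: it builds a $y_1\times y_2$ array $\mathbf{B}_{y_1,y_2}$ out of the four even-weight triples, with diagonal sub-blocks of sizes $\lceil y_i/4\rceil$ and $\lfloor y_i/4\rfloor$, and its entrywise complement $\mathbf{B}^C_{y_1,y_2}$ out of the odd-weight triples. The superimposed $3$-MOFS is then the $2\times2$ block matrix with $\mathbf{B}_{x_1,x_2}$ and $\mathbf{B}_{2\lambda-x_1,2\lambda-x_2}$ on the diagonal and the complemented arrays off it. The subtle point you would have to discover is that balance in the first square across the horizontal seam uses $x_2\equiv 2\lambda-x_2\pmod4$, and balance in the third square across the vertical seam uses $x_1\equiv 2\lambda-x_1\pmod4$; this is why the $\lceil\,\cdot\,/4\rceil$, $\lfloor\,\cdot\,/4\rfloor$ split is needed rather than a naive halving. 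Orthogonality is then checked via \lref{l:bsictriple}. Your plan is pointed in the right direction, but the actual construction and its verification are the content of this case, and they are not yet present in your proposal.
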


\begin{proof}
First suppose $k=1$, so we are looking for a relation on a single
frequency square $F$. Considering the fact that the first row of $F$ contains $\lambda$ zeroes and
$\lambda$ ones we deduce that $|X_2|=\lambda$. Similar consideration of
the first column of $F$ shows that $|X_1|=\lambda$. So without loss of
generality $X_1=X_2=\Lambda$.  Moreover, this is achieved by a
frequency square with four square blocks of order $\lambda$, in the
pattern given in~\eref{e:blocks}.

Next suppose that $k=2$. Let $\{F_1,F_2\}$ be a $2$-MOFS$(2\lambda)$
and $(X_1,X_2,X_3,X_4)$ be a relation on $F_1$ and $F_2$.
By orthogonality,  the superposition of $F_1$ and $F_2$
must contain $\lambda^2$ occurrences of each of the pairs $(0,1)$ and
$(1,0)$. It follows that
$(2\lambda-|X_1|)|X_2|+(2\lambda-|X_2|)|X_1|=2\lambda^2$,
which implies that $|X_1|=\lambda$ or $|X_2|=\lambda$.
Also, $|X_1|$ and $|X_2|$ must be even, by \lref{l:parityrel}.
Therefore, the conditions in the theorem for $k=2$ are necessary.
To show sufficiency, we construct the superimposition of $F_1$ and
$F_2$:
\[
\left[
\begin{array}{cccc}
\mathbf{00}&\mathbf{11}&\mathbf{01}&\mathbf{10}\\
\mathbf{11}&\mathbf{00}&\mathbf{10}&\mathbf{01}\\
\mathbf{01}&\mathbf{10}&\mathbf{00}&\mathbf{11}\\
\mathbf{10}&\mathbf{01}&\mathbf{11}&\mathbf{00}
\end{array}
\right],
\]
where the first two blocks on the diagonal have dimensions
$|X_1|/2\times|X_2|/2$, while the last two blocks on the diagonal
have dimensions $(\lambda-|X_1|/2)\times(\lambda-|X_2|/2)$.

Lastly, consider the case $k=3$. By~\tref{t:no2or3mod4} and \lref{l:parityrel}, we see that
$\lambda$, $|X_1|$ and $|X_2|$ must all be even.
Thus, the conditions in the theorem for $k=3$ are necessary.

Now we show sufficiency.
For even integers $y_1$ and $y_2$, let 
\[
\mathbf{B}_{y_1,y_2} = \left[
\begin{array}{cccc}
\mathbf{000}&\mathbf{011}&\mathbf{101}&\mathbf{110}\\
\mathbf{110}&\mathbf{101}&\mathbf{011}&\mathbf{000}\\
\mathbf{101}&\mathbf{110}&\mathbf{000}&\mathbf{011}\\
\mathbf{011}&\mathbf{000}&\mathbf{110}&\mathbf{101}
\end{array}
\right],
\]
where the first two blocks on the diagonal have dimensions
$\left\lceil \frac{y_1}{4}\right\rceil \times \left\lceil
\frac{y_2}{4}\right\rceil$ and the last two blocks on the diagonal
have dimensions $\left\lfloor \frac{y_1}{4}\right\rfloor \times
\left\lfloor \frac{y_2}{4}\right\rfloor$.  Notice that every $3$-tuple
has an even number of ones and the pairs $(0,0)$, $(0,1)$, $(1,0)$, $(1,1)$
occur the same number of times when each tuple is restricted to 2
entries.  The number of zeroes and ones in each row is balanced on the
2nd and 3rd entries and number of zeroes and ones in each column is
balanced on the 1st and 2nd entries.  For even integers $y_1$ and
$y_2$ let $\mathbf{B}^{C}_{y_1,y_2}$ be the complementary array to
$\mathbf{B}_{y_1,y_2}$.  That is,
\[
\mathbf{B}^C_{y_1,y_2} = 
\left[
\begin{array}{cccc}
\mathbf{111}&\mathbf{100}&\mathbf{010}&\mathbf{001}\\
\mathbf{001}&\mathbf{010}&\mathbf{100}&\mathbf{111}\\
\mathbf{010}&\mathbf{001}&\mathbf{111}&\mathbf{100}\\
\mathbf{100}&\mathbf{111}&\mathbf{001}&\mathbf{010}
\end{array}
\right],
\]
where the dimensions of the diagonal blocks are the same as those in $\mathbf{B}_{y_1,y_2}$.
For any even $x_1$, $x_2$ and $\lambda$ with $x_1,x_2 \leq 2\lambda$, 
we claim that the array  
\begin{equation}\label{e:relon3-MOFS}
\left[
\begin{array}{ll}
\mathbf{B}_{x_1,x_2}&\mathbf{B}^{C}_{x_1, 2\lambda-x_2}\\
\mathbf{B}^C_{2\lambda-x_1,x_2}&\mathbf{B}_{2\lambda -x_1, 2\lambda-x_2}
\end{array}
\right].
\end{equation}
is the superposition of $3$-MOFS$(2\lambda)$ that 
has a relation on $X_1=\{1,\dots,x_1\}$ and $X_2=\{1,\dots,x_2\}$.
The fact that $x_2\equiv2\lambda-x_2\pmod{4}$ ensures balance in the rows
of the first square in~\eref{e:relon3-MOFS}.
The fact that $x_1\equiv2\lambda-x_1\pmod{4}$ ensures balance in the columns
of the third square in~\eref{e:relon3-MOFS}. Other aspects of our claim
are straightforward to check, using \lref{l:bsictriple}. 
\end{proof}

It is worth noting that~\tref{t:relmax} does not generalise to even
$\lambda$.  For example, if we take $x_1=x_2=\lambda\equiv0\pmod{4}$,
then the MOFS in \eref{e:relon3-MOFS} satisfy a relation but are not
maximal, because they are orthogonal to the frequency square with the
block structure given in~\eref{e:blocks}.

To finish this section we consider the relations satisfied by complete
sets.  A $(v_*,k_*,\lambda_*)$-design is a collection ${\mathcal B}$
of $k_*$-subsets of a set $V$ of size $v_*$ such that each pair from
$V$ is contained in exactly $\lambda_*$ blocks (we are using $_*$
subscripts on variables here to distinguish them from similarly named
variables used throughout the paper). Such a design is said to be
\emph{resolvable} if there is a partition of the blocks into
\emph{parallel classes} (i.e. partitions of $V$).

Our next result is implied by Theorem 3.5 from Jungnickel, et
al~\cite{JMM}, and the proof we give is basically the same as in that
paper. We include a proof for completeness and because we want to
squeeze a tiny bit more out of it.

\begin{theorem} \label{t:compldesign}
Let ${\mathcal F}=\{F_1, F_2, \dots ,F_k\}$ be complete {\rm MOFS} 
of order $n$ where $k=(n-1)^2$.  Construct a multiset of blocks
${\mathcal B}$ where for each $s\in\{0,1\}$ and $1\le i\le k$ there
is a block $B_{s,i}\in {\mathcal B}$ that is the set
of columns containing the entry $s$ in the first row of $F_i$.  
Then ${\mathcal B}$ is a resolvable $(n,n/2,(n-1)(n-2)/2)$-design.
Also, if $n\equiv0\pmod4$ then ${\mathcal F}$ satisfies a $(0,n)$-relation. 
\end{theorem}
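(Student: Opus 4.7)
The plan is to handle the two claims separately, using the linear-algebraic framework underlying the Hedayat--Raghavarao--Seiden bound throughout.

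For the design property I would associate to each $F_i$ its signed version $\tilde\chi_i := 2F_i - J \in \{-1,+1\}^{n^2}$, where $J$ is the all-ones matrix. Balance and pairwise orthogonality of $\mathcal F$ translate into: each $\tilde\chi_i$ is orthogonal to $J$, to every row indicator $\rho_r$, and to every column indicator $\kappa_c$; and $\tilde\chi_i,\tilde\chi_j$ are orthogonal whenever $i\ne j$. The subspace $W\subset\mathbb R^{n^2}$ cut out by these conditions has dimension $(n-1)^2 = k$, so the $\tilde\chi_i$ form an orthogonal basis of $W$ with $\|\tilde\chi_i\|^2=n^2$. For distinct columns $c_1,c_2$, I would introduce the vector
\[ v \;=\; e_{(1,c_1)} - e_{(1,c_2)} - \tfrac{1}{n}(\kappa_{c_1}-\kappa_{c_2}), \]
chosen precisely to kill every constraint defining $W$; a short check gives $v\in W$, $\|v\|^2 = 2(n-1)/n$, and $v\cdot\tilde\chi_i = 2(F_i[1,c_1]-F_i[1,c_2])\in\{-2,0,2\}$. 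Parseval on the orthogonal basis then gives
\[ \frac{2(n-1)}{n} \;=\; \frac{1}{n^2}\sum_{i=1}^k (v\cdot\tilde\chi_i)^2 \;=\; \frac{4}{n^2}\,\bigl|\{i:F_i[1,c_1]\ne F_i[1,c_2]\}\bigr|, \]
so any two columns meet in exactly $k - n(n-1)/2 = (n-1)(n-2)/2 = \lambda_*$ blocks $B_{s,i}$. Combined with $|B_{s,i}|=n/2$ and the parallel classes $\{B_{0,i},B_{1,i}\}$ for $i=1,\dots,k$, this yields the claimed resolvable $(n,n/2,\lambda_*)$-design.

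For the relation, re-running the same argument with row $1$ replaced by an arbitrary row $r$ gives $|\{i:F_i[r,c_1]=F_i[r,c_2]\}|=\lambda_*$ in every row. Writing $w(r,c) = \sum_i F_i[r,c]$, inclusion--exclusion on the distribution of $(F_i[r,c_1],F_i[r,c_2])$ gives
\[ \bigl|\{i:F_i[r,c_1]=F_i[r,c_2]=1\}\bigr| \;=\; \frac{\lambda_*+w(r,c_1)+w(r,c_2)-k}{2}, \]
and integrality of the left side forces $w(r,c_1)+w(r,c_2)\equiv k-\lambda_* = n(n-1)/2\pmod 2$. When $n\equiv 0\pmod 4$, $n/2$ is even, so this parity vanishes and $w(r,c_1)\equiv w(r,c_2)\pmod 2$. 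Transposing $\mathcal F$ (which is again a complete MOFS) supplies the same statement along columns, and together these force $w(r,c)$ to have a single constant parity $p\in\{0,1\}$ on the entire grid; equivalently, $\sum_{i=1}^k F_i\equiv pJ\pmod 2$. If $p=1$, then choosing $X_1=\emptyset$, $X_2=N(n)$, and $X_c=\{1\}$ for every $c\ge 3$ satisfies the relation axiom and is the $(0,n)$-relation predicted by \lref{l:charactrel} with subset $\mathcal F$. If $p=0$, flipping a single $X_{c_0}$ from $\{1\}$ to $\{0\}$ (for any $c_0\ge 3$) absorbs the missing bit of parity while keeping $|X_1|=0$ and $|X_2|=n$, still giving a $(0,n)$-relation.

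The main obstacle is the relation part. The design calculation is essentially the Jungnickel--Mavron--Mullen argument, but converting the constant-parity observation into a $(0,n)$-relation requires a small extra twist: when $p=0$, the direct ``subset sum equals $J$'' construction coming from \lref{l:charactrel} fails, so one has to invoke the $X_c=\{0\}$ option rather than the $X_c=\{1\}$ option for one index $c_0$ to produce the relation in the required form.
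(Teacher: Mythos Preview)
Your proof is correct, but it takes a genuinely different route from the paper's.

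For the design claim, the paper fixes cell $(1,1)$, defines $\theta_{r,c}$ as the number of squares in which $(r,c)$ and $(1,1)$ agree, computes $\sum\theta_{r,c}$ and $\sum\binom{\theta_{r,c}}{2}$ by elementary counting, and then shows that a weighted sum of squares $\sum(\theta_*-\theta_{r,c})^2$ collapses to zero, forcing every $\theta_{r,c}$ to equal its average. This is a variance-zero argument carried out entirely at the level of the counts. Your argument instead exploits directly that completeness makes $\{\tilde\chi_i\}$ an orthogonal \emph{basis} of the doubly-centred space $W$, and extracts the pair-coverage number via Parseval on a cleverly chosen test vector $v$. Both are second-moment arguments at heart, but yours makes the linear algebra behind the Hedayat--Raghavarao--Seiden bound explicit and reusable; the paper's is more self-contained combinatorics.

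For the relation, the paper gets more than you do: its variance argument actually pins down $\theta_{r,c}$ exactly for \emph{all} $(r,c)$ in one shot (two constants $\theta_1,\theta_2$, both odd when $n\equiv0\pmod4$), and then simply standardises so that $F_i[1,1]=1$ for every $i$, whereupon the $\mathbb Z_2$-sum is identically $1$. You instead redo the row computation for each row to obtain row-wise parity constancy of $w(r,c)$, appeal to transposition for column-wise constancy, and combine the two. This yields only the parity (not the exact values), and you must then handle the $p=0$ case by flipping one $X_{c_0}$ to $\{0\}$. Both routes reach the same $(0,n)$-relation; the paper's standardisation step is equivalent to your $X_{c_0}=\{0\}$ device, since complementing $F_i$ is the same as replacing $X_{i+2}=\{1\}$ by $\{0\}$. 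Your treatment has the minor virtue of applying literally to the given $\mathcal F$ rather than to an isomorphic copy.
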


\begin{proof}
Without loss of generality, consider a cell $(1,1)$. For each cell
$(r,c)\neq (1,1)$, let $\theta_{r,c}$ be the number of frequency
squares in ${\mathcal F}$ such that cells $(1,1)$ and $(r,c)$ contain
the same entry.  Next, define
$$\alpha_1=\sum_{c=2}^n \theta_{1,c}+ \sum_{r=2}^n \theta_{r,1}\quad\text{and}
\quad\alpha_2=\sum_{r=2}^n\sum_{c=2}^n \theta_{r,c}.$$ 
In each frequency
square, there are $n/2-1$ cells other than $(1,1)$ in row (column)
$1$ containing the same entry as cell $(1,1)$.  Therefore,
$\alpha_1=2k(n/2-1)$.  Similarly, $\alpha_2=k(n^2/2-n+1)$.

Let $F,F'\in {\mathcal F}$.  Then since $F$ and $F'$ are orthogonal,
the number of cells $(r,c)\neq (1,1)$ such that $(r,c)$ and $(1,1)$
share the same entry in $F$ and $F'$ is equal to $n^2/4-1$.
Thus,
$$\sum_{(r,c)\neq (1,1)} {\theta_{r,c} \choose 2} = {k\choose 2}(n^2/4-1).$$
It follows that:
\begin{equation}\label{e:lamsq}
  \sum_{(r,c)\neq (1,1)} \theta_{r,c}^2
= k(k-1)(n^2/4-1)+\sum_{(r,c)\neq (1,1)}\theta_{r,c}
= k(k-1)(n^2/4-1)+\alpha_1+\alpha_2.
\end{equation}

Next, define $\theta_1=(n/2-1)(n-1)$ and $\theta_2=(n^2/2-n+1)$ and observe that
\begin{align*}
\sum_{c=2}^n (\theta_1-\theta_{1,c})^2 +& \sum_{r=2}^n (\theta_1-\theta_{r,1})^2
+\sum_{r=2}^n\sum_{c=2}^n (\theta_2-\theta_{r,c})^2\\
&=2(n-1)\theta_1^2+(n-1)^2\theta_2^2
-2\alpha_1\theta_1-2\alpha_2\theta_2
+\sum_{(r,c)\neq (1,1)} \theta_{r,c}^2=0,
\end{align*}
given \eref{e:lamsq} and the fact that $k=(n-1)^2$.  Thus, we must
have $\theta_{1,c}=\theta_1=\theta_{r,1}$ for each $r\neq 1$ and
$c\neq 1$, and $\theta_{r,c}=\theta_2$ for all $r\ge2$ and $c\ge2$.
In particular, for each column $c\neq 1$, the number of frequency
squares which contain the same entry in both cells $(1,1)$ and $(1,c)$
is constant and equal to $\theta_1$. The same is true if we replace
$(1,1)$ by any other fixed cell in the first row.  Thus in the set
${\mathcal B}$ as defined above, each pair of columns occurs in
precisely $\theta_1$ blocks.  Also the blocks
$B_{0,i}$ and $B_{1,i}$ are complementary sets, for each $i$.
It follows that ${\mathcal B}$ is a
resolvable $(n,n/2,(n-1)(n-2)/2)$-design.

Finally, note that if $n\equiv0\pmod4$ then
$(n-1)^2\equiv\theta_1\equiv\theta_2\equiv1\pmod2$. Hence, if we
standardise ${\mathcal F}$ by complementing any square that has a zero
in cell $(1,1)$ then the $\Z_2$-sum of ${\mathcal F}$ will be a
matrix in which every entry is $1$.
\end{proof}

The method used in \tref{t:compldesign} would also show that if
$n\equiv2\pmod4$ then ${\mathcal F}$ must satisfy a relation. However,
this is a moot point, given \Cref{cy:nooddcomp} below.  The following
theorem is stated in \cite[Thm~II.7.31]{CRC}; again we include a 
short proof for completeness.

\begin{theorem}\label{t:nodes} 
  For odd positive integers $t\ge1$ and $k>1$, there does not exist a
  resolvable $(2k,k,t(k-1))$-design.
\end{theorem}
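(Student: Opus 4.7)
The plan is to assume such a design exists and derive a contradiction from a $2$-adic divisibility mismatch. First, I would fix an orientation of each of the $r=t(2k-1)$ parallel classes by designating one of its two complementary blocks as $A_i$, and form the $2k\times r$ signed incidence matrix $M$ with $M_{x,i}=+1$ if $x\in A_i$ and $M_{x,i}=-1$ otherwise.

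A short count using the $2$-design property together with the fact that blocks within a parallel class are disjoint shows that any two distinct points $x,y$ lie in the same block of exactly $\lambda=t(k-1)$ of the classes, so rows $x$ and $y$ of $M$ have inner product $2\lambda-r=-t$. Hence $MM^T=2tk\,I_{2k}-tJ_{2k}$, which has rank $2k-1$ with kernel spanned by $\mathbf{1}$. Deleting one row and the corresponding column gives the nonsingular $(2k-1)\times(2k-1)$ matrix $A=2tk\,I-tJ$, whose eigenvalues $t$ (on $\mathbf{1}$) and $2tk$ (with multiplicity $2k-2$) yield
\[
\det A \;=\; t(2tk)^{2k-2}\;=\;2^{2k-2}\,t^{2k-1}\,k^{2k-2}.
\]

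The key move is to observe that $A=M'(M')^T$, where $M'$ is obtained from $M$ by deleting the same row, and to apply the Cauchy--Binet formula
\[
\det A \;=\; \sum_{\substack{S\subseteq\{1,\dots,r\}\\|S|=2k-1}}\bigl(\det M'[\,\cdot\,,S]\bigr)^{2}.
\]
Each $M'[\,\cdot\,,S]$ is a $(2k-1)\times(2k-1)$ matrix with $\pm1$ entries, and subtracting its first row from each of the other $2k-2$ rows makes those rows lie in $\{-2,0,2\}^{2k-1}$, so every such determinant is divisible by $2^{2k-2}$ and every squared determinant by $2^{4k-4}$. The entire sum is therefore divisible by $2^{4k-4}$, forcing $2^{2k-2}$ to divide $t^{2k-1}k^{2k-2}$. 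Since $t$ and $k$ are both odd and $k\ge 3$, the right-hand side is odd while $2^{2k-2}\ge 16$, a contradiction.

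The main obstacle is really just recognising the right determinantal setup; once Cauchy--Binet is applied to the $(2k-1)$-principal minor of $MM^T$, the mismatch between the $2$-adic valuation of $\det A$ and the divisibility forced by the standard $\pm1$-determinant fact completes the proof almost automatically.
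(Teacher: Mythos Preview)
Your argument is correct, but it takes a quite different route from the paper's. The paper gives a three-line combinatorial parity argument: fix any triple $\{x,y,z\}$, let $s$ be the number of blocks containing all three, and use inclusion--exclusion together with resolvability (the complement of a block missing $y$ and $z$ must contain both) to deduce $\lambda_*=t+2s$; since $t$ is odd this makes $\lambda_*$ odd, contradicting $\lambda_*=t(k-1)$ with $k$ odd. Your proof instead builds the $\pm1$ signed class-incidence matrix $M$, computes the Gram matrix $MM^T=2tkI-tJ$, takes a principal $(2k-1)$-minor, and compares two evaluations of its determinant: the eigenvalue computation gives $2^{2k-2}t^{2k-1}k^{2k-2}$, while Cauchy--Binet together with the standard fact that an $n\times n$ $\pm1$ determinant is divisible by $2^{n-1}$ forces divisibility by $2^{4k-4}$. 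With $t,k$ odd and $k\ge3$ this is a clean $2$-adic mismatch.

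The paper's approach is shorter and entirely elementary; yours is a pleasant Bruck--Ryser--Chowla-style argument whose ingredients (Gram-matrix determinant versus Cauchy--Binet with a local divisibility bound on minors) are reusable in other nonexistence problems, and it makes transparent exactly which power of $2$ obstructs the design. One minor remark: your final sentence invokes $k\ge3$, but in fact $k>1$ already gives $2^{2k-2}\ge4$, which is enough; the stronger inequality is harmless since $k$ odd and $k>1$ do force $k\ge3$.
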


\begin{proof}
Suppose such a design ${\mathcal B}$ exists and let $\lambda_*=t(k-1)$. 
For any subset $W$ of $V$, let $r_{W}$ be the number of blocks
containing $W$ as a subset.  Suppose that $r_{\{x,y,z\}}=s$ for some
distinct $x$, $y$ and $z$. Then the number of blocks containing $x$
but neither $y$ nor $z$ is $r_{\{x\}}-2\lambda_*+s=t+s$. But for each such
block $B$, the pair $\{y,z\}$ must be in the other block of the
parallel class containing $B$. Thus $\lambda_*=r_{\{y,z\}}=t+2s$. But
$\lambda_*$ is even, a contradiction.
\end{proof}

\begin{corollary}\label{cy:nooddcomp}
There does not exist a complete {\rm MOFS} of order $n$ whenever $n/2$
is odd and $n>2$.
\end{corollary}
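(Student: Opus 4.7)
The plan is to chain together Theorem~\ref{t:compldesign} and Theorem~\ref{t:nodes}: the first associates a resolvable design to any complete MOFS, and the second rules out resolvable designs with the parameters that arise when $n/2$ is odd.

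Concretely, suppose for contradiction that a complete MOFS of order $n=2\lambda$ exists, with $\lambda$ odd and $\lambda>1$. Applying Theorem~\ref{t:compldesign} produces a resolvable $(n,n/2,(n-1)(n-2)/2)$-design. Substituting $n=2\lambda$, the parameter triple becomes
\[
\bigl(2\lambda,\;\lambda,\;(2\lambda-1)(\lambda-1)\bigr),
\]
which matches the shape $(2k,k,t(k-1))$ of Theorem~\ref{t:nodes} with $k=\lambda$ and $t=2\lambda-1$. Since $\lambda\ge 3$ is odd, we have $k>1$ and $t$ is an odd positive integer, so the hypotheses of Theorem~\ref{t:nodes} are satisfied. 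That theorem then denies the existence of the design, contradicting the assumption and completing the proof.

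There is essentially no obstacle here: the combinatorial content has already been extracted in the two preceding theorems, and the only task is to verify that the parameter $t=2\lambda-1$ is odd (which is automatic) and that $k=\lambda>1$ (which follows from $n>2$). If anything merits a sentence of care, it is just confirming that the hypothesis $n>2$ is exactly what is needed to exclude the degenerate case $\lambda=1$ in which Theorem~\ref{t:nodes} cannot be invoked (since then $k=1$).
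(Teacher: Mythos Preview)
Your proof is correct and follows essentially the same approach as the paper: both invoke Theorem~\ref{t:compldesign} to obtain a resolvable $(n,n/2,(n-1)(n-2)/2)$-design and then apply Theorem~\ref{t:nodes} with $k=n/2$ and $t=n-1$ to derive a contradiction. The only cosmetic difference is that you write the parameters in terms of $\lambda=n/2$, whereas the paper keeps them in terms of $n$.
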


\begin{proof}
Taking $k=n/2$ and $t=n-1$ in \tref{t:nodes}, we find that the design
required by \tref{t:compldesign} does not exist when $n/2$ is odd.
\end{proof}

\section{The lonely bachelor}\label{s:bach}

In this section we show that for order $n=2\lambda$, there are no bachelor
frequency squares if $\lambda$ is even and only one bachelor square
(up to isomorphism) if $\lambda$ is odd.

Let $A_{2\lambda}$ be the unique frequency square satisfying
a (non-trivial) relation, as shown in~\tref{t:smallkrel}.
We will make use of the following well known corollary of Dirac's Theorem:

\begin{theorem}\label{t:perfect}
Let $G$ be a simple graph with $2\lambda$ vertices and minimum
degree at least $\lambda$.  Then $G$ has a perfect matching.
\end{theorem}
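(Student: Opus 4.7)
The plan is to derive this directly from Dirac's theorem, which states that any simple graph on $n\ge3$ vertices with minimum degree at least $n/2$ contains a Hamilton cycle. Since we are given $n=2\lambda$ vertices and minimum degree at least $\lambda=n/2$, Dirac's theorem applies (after dispensing with the trivial small cases) and produces a Hamilton cycle $C$ in $G$.

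Next I would extract a perfect matching from $C$. Since $|V(G)|=2\lambda$ is even, the Hamilton cycle $C$ has an even number of edges, so its edges alternate around the cycle to give two edge-disjoint perfect matchings; picking either one of them yields the desired perfect matching of $G$.

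The only loose ends are the degenerate cases that fall outside the scope of Dirac's theorem. If $\lambda=0$ the empty matching is vacuously perfect, while if $\lambda=1$ the hypothesis forces $G$ to contain the unique edge between its two vertices, which by itself is a perfect matching. So one would simply note these cases separately before invoking Dirac for $\lambda\ge2$.

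There is no real obstacle here: the result is a well-known textbook consequence of Dirac's theorem, and the argument is essentially a one-liner. The only thing worth taking care over is the trivial boundary case $\lambda\le1$, and the observation that a Hamilton cycle on an even number of vertices does contain a perfect matching (which would fail on an odd cycle).
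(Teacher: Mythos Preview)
Your proposal is correct and matches the paper's approach exactly: the paper does not give a proof but simply introduces the statement as ``the following well known corollary of Dirac's Theorem,'' which is precisely the derivation you sketch (Hamilton cycle on $2\lambda$ vertices, take alternate edges, handle $\lambda\le1$ trivially).
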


\begin{theorem}\label{t:bach}
Let $B$ be a frequency square of type $F(2\lambda;\lambda)$.  Then $B$
is a bachelor if and only if $\lambda$ is odd and $B$ is isomorphic to
$A_{2\lambda}$.
\end{theorem}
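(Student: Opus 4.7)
The ``if'' direction is immediate: by Theorem~\ref{t:smallkrel} (case $k=1$) the singleton $\{A_{2\lambda}\}$ satisfies a full relation, and since both $\lambda$ and $k=1$ are odd, Theorem~\ref{t:relmax} forces $A_{2\lambda}$ to be a bachelor. For the converse I would prove the contrapositive: if $B\not\cong A_{2\lambda}$ then $B$ has an orthogonal mate $F$, regardless of the parity of $\lambda$. View $B$ as a $\lambda$-regular bipartite graph with parts $U$ (rows) and $V$ (columns) of size $2\lambda$ each, and write $N(v)=\{u\in U:B[u,v]=1\}$. If $\lambda$ is even, apply K\"onig's edge-coloring theorem to decompose $B$ into perfect matchings $M_1,\dots,M_\lambda$ and the bipartite complement $\bar B$ into $N_1,\dots,N_\lambda$, and take $F=M_1\cup\cdots\cup M_{\lambda/2}\cup N_1\cup\cdots\cup N_{\lambda/2}$; then $F$ is $\lambda$-regular with $|F\cap B|=(\lambda/2)(2\lambda)=\lambda^2$, giving orthogonality.

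If $\lambda$ is odd and $B\not\cong A_{2\lambda}$, define an auxiliary graph $H$ on vertex set $V$ by joining $v\sim v'$ iff $N(v)\ne U\setminus N(v')$, i.e., the two columns are not complementary. A row-counting argument shows that if some $v$ had $\lambda$ or more columns $v'$ with $N(v')=U\setminus N(v)$, then the rows in $U\setminus N(v)$ would be forced to place all of their ones in those $\lambda$ complementary columns; tracing consequences in $N(v)$ then forces every other column to have support exactly $N(v)$, giving $B$ the block structure~\eref{e:blocks} up to permutation and hence $B\cong A_{2\lambda}$, contradicting the hypothesis. So every vertex of $H$ has degree at least $2\lambda-1-(\lambda-1)=\lambda$ on $2\lambda$ vertices, and Theorem~\ref{t:perfect} supplies a perfect matching $\pi$ of $H$, i.e.\ a pairing of $V$ into $\lambda$ non-complementary pairs. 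For each pair $\{v,v'\}\in\pi$ set $c=|N(v)\cap N(v')|\ge1$ and split $U$ according to $(B[u,v],B[u,v'])$ into four parts of sizes $\lambda-c,\lambda-c,c,c$. Choose $R^v\subseteq U$ with $|R^v|=\lambda$ meeting the $(1,0)$- and $(0,1)$-parts in equal numbers (take both intersections of size $(\lambda-1)/2$ and draw the single remaining element from the $(1,1)$- or $(0,0)$-part, which is feasible precisely because $c\ge1$ and $\lambda$ is odd); let $R^{v'}=U\setminus R^v$. Define $F[u,w]=1$ iff $u\in R^w$ for each $w\in\{v,v'\}$ and repeat over all pairs. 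A direct calculation shows $F$ is $\lambda$-regular, and because $|R^v\cap N(v)|+|R^{v'}\cap N(v')|$ telescopes to $\lambda$ (the $|R^v\cap(N(v)\cap N(v'))|$-term cancels), each pair contributes exactly $\lambda$ to $|F\cap B|$, so $|F\cap B|=\lambda^2$ and $F$ is orthogonal to $B$.

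The main obstacle is the odd-$\lambda$ case: one must isolate the structural obstruction (rows/columns made only of mutually complementary supports, forcing $B\cong A_{2\lambda}$) and then deploy Dirac's theorem on the correctly chosen auxiliary graph to produce a non-complementary column pairing. Once this pairing is in hand, the balanced choice of $R^v$ is what causes the per-pair cross-terms in $|F\cap B|$ to cancel and land at exactly $\lambda$; this balancing is possible for $\lambda$ odd only when $c\ge1$, which is exactly the property the Dirac step guarantees.
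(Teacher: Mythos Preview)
Your overall strategy matches the paper's: use Theorem~\ref{t:relmax} for the ``if'' direction, and for the converse build an orthogonal mate by first applying Dirac's theorem (Theorem~\ref{t:perfect}) to an auxiliary graph whose edges record ``not complementary'' pairs, then fill in the mate two-at-a-time along the resulting perfect matching. The paper does this on \emph{rows} (pairing rows that are not complementary and constructing $B'$ two rows at a time), whereas you do the transposed version on \emph{columns}; the structural argument that a vertex of low degree in the auxiliary graph forces $B\cong A_{2\lambda}$ is the same in both. Your K\"onig decomposition for even $\lambda$ is a genuinely different and cleaner treatment of that sub-case than the paper's, which folds even $\lambda$ into the same pairing machinery (with identical rows paired when $B\cong A_{2\lambda}$).

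There is, however, a small but real gap in your odd-$\lambda$ construction. You claim that taking $|R^v\cap(1,0)\text{-part}|=|R^v\cap(0,1)\text{-part}|=(\lambda-1)/2$ and one further element from the $(1,1)$- or $(0,0)$-part is ``feasible precisely because $c\ge1$''. But the $(1,0)$- and $(0,1)$-parts each have size $\lambda-c$, so this recipe requires $\lambda-c\ge(\lambda-1)/2$, i.e.\ $c\le(\lambda+1)/2$; it fails for larger $c$ (e.g.\ when the two columns are identical, $c=\lambda$). The fix is easy and your general principle is correct: you only need \emph{some} $R^v$ of size $\lambda$ meeting the $(1,0)$- and $(0,1)$-parts equally, and taking that common intersection size to be $a=\min\{(\lambda-1)/2,\lambda-c\}$ always works (the remaining $\lambda-2a$ elements can be drawn from the $(1,1)\cup(0,0)$-part of size $2c$). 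The paper's explicit formulas for how many columns of each type to assign play exactly this role and are checked to be feasible for all admissible parameter values.
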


\begin{proof}
The fact that $A_{2\lambda}$ is a bachelor when $\lambda$ is odd
follows from \tref{t:relmax}.  
So it suffices to construct an orthogonal mate $B'$ of type
$F(2\lambda;\lambda)$ for any $B$ such that $\lambda$ is even or $B$
is not isomorphic to $A_{2\lambda}$

With respect to any two distinct rows $r$ and $r'$ of $B$, we say that
a column $c$ is of type $1$ if cells $(r,c)$ and $(r',c)$ contain the
same entry; otherwise column $c$ is of type $2$. We say that a pair of
distinct rows $\{r,r'\}$ in $B$ is \emph{bad} if every column is of
type $2$ with respect to that pair.  

We aim to partition the rows of $B$ into pairs so that no pair is bad.
Observe that for a given row $r$, there are at most $\lambda$ rows
$r'$ such that $\{r,r'\}$ is a bad pair.  Suppose first that there
exists a row $r$ such that there are exactly $\lambda$ rows $r'$ for
which $\{r,r'\}$ is a bad pair. Each of those $\lambda$ rows must
be identical, and it quickly follows that $B$ is isomorphic to
$A_{2\lambda}$.  By our assumptions, $\lambda$ must then be even, so
we can easily avoid bad pairs by partitioning rows into pairs
of identical rows.

Otherwise for each row $r$ there exists \emph{at most} $\lambda-1$ rows
$r'$ such that $\{r,r'\}$ is a bad pair. Form a graph $G$ where the
vertices are the $2\lambda$ rows of $B$, and two rows $r$
and $r'$ are joined by an edge if and only if $\{r,r'\}$ is not a bad
pair. The minimum degree of $G$ is at least $\lambda$, so by 
\tref{t:perfect}, the graph $G$ contains a perfect matching. Thus there
exists a partition ${\mathcal P}$ of the rows of $B$ into pairs, none
of which is bad.

For each $\{r,r'\}\in {\mathcal P}$, we next construct corresponding
rows $r$ and $r'$ in $B'$ so that:
\begin{itemize}
\item Rows $r$ and $r'$ in $B'$ each contain $\lambda$ ones and $\lambda$ zeroes; 
\item Each column in $B'$ is of type $2$ with respect to rows $r$ and $r'$; and 
\item When rows $r$ and $r'$ of $B$ and $B'$ are superimposed, 
      the ordered pairs $(0,0)$, $(0,1)$ and $(1,0)$ and $(1,1)$ each occur $\lambda$ times.  
\end{itemize}
Assuming these properties hold for every pair in ${\mathcal P}$, the
first and second conditions guarantee that $B'$ is a frequency square
of type $F(2\lambda;\lambda)$ while the third condition guarantees
that $B'$ is an orthogonal mate for $B$.

Hence, given $\{r,r'\}\in {\mathcal P}$ it remains to determine the
entries of rows $r$ and $r'$ of $B'$ satisfying these properties.  We
say that a column $c$ is of type $1$a (respectively, $1$b) with
respect to $(r,r')$ if $(r,c)$ and $(r',c)$ each contain $0$
(respectively, $1$).  We say that a column $c$ is of type $2$a
(respectively, $2$b) with respect to $(r,r')$ if $(r,c)$ contains $0$
(respectively, $1$) and $(r',c)$ contains $1$ (respectively, $0$).

Within rows $r$ and $r'$ of $B$, let $t_{1a}$ be the number of columns
of type $1a$, with $t_{1b}$, $t_{2a}$ and $t_{2b}$ defined
similarly. Since each row contains $\lambda$ zeroes and $\lambda$ ones,
$$t_{1a}+t_{2a}=t_{1b}+t_{2b}=t_{1b}+t_{2a}=t_{1a}+t_{2b}=\lambda,$$
from which it follows that $t_{1a}=t_{1b}$ and $t_{2a}=t_{2b}$. 
Since the pair of rows $\{r,r'\}$ is not bad, $t_{1a}=t_{1b}>0$. 
Of the columns in $B$ of types $1a$, $1b$, $2a$ and $2b$, respectively,
we place $\lambda-\lfloor t_{1a}/2\rfloor-2\lceil t_{2a}/2\rceil$, 
$\lfloor t_{1a}/2\rfloor$, $\lceil t_{2a}/2\rceil$ and $\lceil t_{2a}/2\rceil$ 
columns of type $2a$ in the corresponding positions in $B'$. That gives
us $\lambda$ columns of type $2a$, and the
other $\lambda$ columns in $B'$ are made to be of type $2b$. Note that 
\begin{align*}
t_{1a}\ge t_{1a}-\lfloor t_{1a}/2\rfloor
=\lambda-\lfloor t_{1a}/2\rfloor-t_{2a}
&\ge\lambda-\lfloor t_{1a}/2\rfloor-2\lceil t_{2a}/2\rceil\\
&\ge\lambda-\lfloor t_{1a}/2\rfloor-t_{2a}-1=t_{1a}-\lfloor t_{1a}/2\rfloor-1\ge0
\end{align*}
given that $t_{1a}\ge1$. 
In particular, $t_{1a}\ge
\lambda-\lfloor t_{1a}/2\rfloor-2\lceil t_{2a}/2\rceil\ge0$, which shows that
our construction is feasible. Moreover, the
ordered pairs $(0,0)$, $(0,1)$ and $(1,0)$ and $(1,1)$ each occur 
$t_{1a}+\lceil t_{2a}/2\rceil+t_{2a}-\lceil t_{2a}/2\rceil=\lambda$
times in rows $\{r,r'\}$ of the superposition of $B$ and $B'$,
as required.
\end{proof}

\section{Trades in MOFS}\label{s:trades}

In this section we consider some transformations that can be used to
alter the structure of a set of MOFS. The idea is to identify a
comparatively small number of cells that can be changed, whilst
preserving the property of being a set of MOFS.  This leads to the
idea of \emph{trades}, which has been extensively studied for other
designs~\cite{Bil,Cav,HK}, but we are not aware of any previous work
regarding trades in binary MOFS.

Formally we define a trade in a set $\{F_1,\dots,F_k\}$ of MOFS to
be a suitable set of cells $C_i$ for each $F_i$ in the set of MOFS.
An individual $C_i$ can be empty, but they should not all be empty.  
To \emph{switch} the trade we change the entries in every cell in $C_i$
in square $F_i$ for each $i$. The test for whether the chosen cells
are suitable is that the result of switching on the trade should 
again be a set of MOFS. We do not attempt to characterise the general case
any further, but instead look at a simple special case which is
already powerful enough to be interesting.  In this special case
the nonempty $C_i$ are all equal.

\begin{theorem}\label{t:basictrades}
Suppose that we have a set $\F=\{F_1,F_2,\dots,F_k\}$ of MOFS of order $n$ 
and $\emptyset\neq C\subseteq N(n)\times N(n)$. For $1\le i\le k$, let 
$C_i=C$ if $F_i$ either agrees with $F_1$ on every cell in
$C$ or disagrees with $F_1$ on every cell in $C$, and let $C_i=\emptyset$
otherwise. 
Let $V_{i,a}=\{(x,y)\in C:F_i(x,y)=a\}$ for $1\le i\le k$ and $a\in\{0,1\}$.
Then $T=(C_1,C_2,\dots,C_k)$ forms a trade if and only if
\begin{itemize}
\item 
Each row or column of $F_1$ contains equal numbers of 
zeros and ones within the cells in $C$.
\item 
For each $j$ such that $C_j=\emptyset$, we have
$|V_{1,1}\cap V_{j,1}|=|V_{1,0}\cap V_{j,1}|$.
\end{itemize}
\end{theorem}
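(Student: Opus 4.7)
The plan is to verify the two listed conditions are necessary and sufficient by separately analysing what it means, after switching, for each $F_i'$ to remain a frequency square and for each pair $(F_i',F_j')$ to remain orthogonal. By construction $C_i\in\{C,\emptyset\}$, so the cells that actually change lie inside $C$, and only four situations arise when comparing two squares: both unchanged, both flipped on $C$, or precisely one flipped. I will handle these in turn.

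First I would check the frequency square property. For $C_i=\emptyset$ nothing changes, and for $C_i=C$ the entries of $F_i$ on $C$ coincide either with those of $F_1$ or with their complements; hence flipping them preserves every row sum and every column sum of $F_i$ if and only if each row and each column of $F_1$ restricted to $C$ contains equally many zeros and ones. This is exactly the first bullet. Summing over any row (or column) gives the global consequence $|V_{1,0}|=|V_{1,1}|$, which I will use repeatedly below.

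Next I would analyse orthogonality. Writing $n_{ab}(C)$ for the number of cells in $C$ on which $(F_i,F_j)=(a,b)$, the switching preserves the count of each ordered pair $(a,b)$ globally iff on $C$ the multiset of pairs $(F_i',F_j')$ matches the multiset of pairs $(F_i,F_j)$. For $C_i=C_j=\emptyset$ this is automatic. For $C_i=C_j=C$ a pair $(a,b)$ on $C$ becomes $(1-a,1-b)$, so we need $n_{ab}(C)=n_{1-a,1-b}(C)$; since $V_{i,a}\in\{V_{1,a},V_{1,1-a}\}$ and similarly for $j$, expanding the four sub-cases (agree/agree, agree/disagree, disagree/agree, disagree/disagree) rewrites all four counts in terms of $|V_{1,0}|$ and $|V_{1,1}|$, and the required equalities reduce to $|V_{1,0}|=|V_{1,1}|$, which holds by bullet one. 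For $C_i=C$ and $C_j=\emptyset$ the pair $(a,b)$ becomes $(1-a,b)$, so we need $n_{ab}(C)=n_{1-a,b}(C)$ for every $(a,b)$. Using $V_{i,a}\in\{V_{1,a},V_{1,1-a}\}$ this collapses to $|V_{1,0}\cap V_{j,b}|=|V_{1,1}\cap V_{j,b}|$ for each $b$; the case $b=1$ is precisely bullet two, and the case $b=0$ follows from the $b=1$ case by subtracting from $|V_{1,0}|=|V_{1,1}|$. This identifies bullet two as the only extra requirement beyond bullet one.

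The main obstacle is simply managing the case analysis without error; the key structural observation that drives everything is that on $C$ the entries of each flipped $F_i$ are rigidly controlled by $F_1$ (being either $F_1$ or $1-F_1$ there), so every quantity appearing in the orthogonality checks can be rewritten in terms of the sets $V_{1,a}$ and $V_{j,b}$. Once this rewriting is in place, bullet one (global balance $|V_{1,0}|=|V_{1,1}|$) handles all \emph{both-flipped} subcases, and bullet two (together with bullet one) handles the \emph{mixed} subcase, giving both necessity and sufficiency.
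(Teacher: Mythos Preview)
Your proposal is correct and follows essentially the same approach as the paper: verify that the first bullet is equivalent to each switched square remaining a frequency square, derive $|V_{1,0}|=|V_{1,1}|$ from it, and then check orthogonality case by case according to whether $C_i$ and $C_j$ equal $C$ or $\emptyset$. Your treatment is in fact more explicit than the paper's own proof, which asserts the orthogonality conclusion rather tersely (in particular it does not spell out the both-flipped case or the agree/disagree sub-cases as you do), but the underlying argument is the same.
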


\begin{proof}
We note that $T$ is nonempty, because $C_1=C\neq\emptyset$.  Let
$\{F'_1,F'_2,\dots,F'_k\}$ be the matrices produced by switching
$\{F_1,F_2,\dots,F_k\}$ on $T$.  First suppose that the two conditions
are satisfied.  The first condition guarantees that each $F'_i$ is a
frequency square.  The two conditions together imply that
$|V_{1,1}\cap V_{j,0}|=|V_{1,0}\cap V_{j,0}|$.  Therefore, the two
conditions ensure that $F'_i$ is orthogonal to $F'_j$ for all $1\le
i<j\le k$, since $F_i$ is orthogonal to $F_j$. Thus, $T$ is a trade if
the two conditions are satisfied.  Conversely, if $T$ is a trade, then
the first condition is satisfied, since $F_1'$ is a frequency
square. Finally, for $j$ such that $C_j = \emptyset$, $F_1'$ is
orthogonal to $F_j'=F_j$ only if the second condition is
satisfied. This completes the proof.
\end{proof}

We call the trades described in~\tref{t:basictrades}, \emph{basic trades}.
There are some fairly trivial examples of basic trades where switching
on the trade does not change the combinatorial structure of the MOFS.

\begin{lemma}\label{l:trivtrades}
Below are basic trades for which switching does not change the isomorphism class of the MOFS.
\begin{itemize}
\item $C=N(n)\times N(n)$.
\item Taking $C$ to be the set of cells on which $F_1$    agrees with $F_j$ for some fixed $j>1$. 
\item Taking $C$ to be the set of cells on which $F_1$ disagrees with $F_j$ for some fixed $j>1$. 
\end{itemize}
\end{lemma}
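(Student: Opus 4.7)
The plan is to handle each of the three bullets by first verifying the two conditions of Theorem~\ref{t:basictrades} (so that $T = (C_1, \dots, C_k)$ really is a trade) and then identifying the switched MOFS as the result of applying standard isomorphism operations to the original set.

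Case~1, where $C = N(n) \times N(n)$, is the easiest. For any $i > 1$, orthogonality of $F_1$ and $F_i$ rules out $F_i = F_1$ (the pair $(0,1)$ would never occur) and $F_i = 1 - F_1$ (the pair $(0,0)$ would never occur), so $C_i = \emptyset$ while $C_1 = C$. The first trade condition holds because $F_1$ is itself a frequency square; the second holds because orthogonality forces $|V_{1,1} \cap V_{i,1}| = |V_{1,0} \cap V_{i,1}| = n^2/4$. Switching $T$ complements every entry of $F_1$ and leaves the other squares fixed, which is precisely the isomorphism operation of permuting the two symbols in one square.

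Cases~2 and~3 run in parallel, since by orthogonality the agreement set and the disagreement set of $F_1$ and $F_j$ each have size $n^2/2$. For any $i \notin \{1, j\}$, the assumption $C_i = C$ would force the set of cells where $F_i$ agrees with $F_1$ (respectively, disagrees with $F_1$) to coincide with $C$, and equality of these two sets of size $n^2/2$ would then force $F_i \in \{F_j, 1 - F_j\}$, contradicting orthogonality of $F_i$ and $F_j$. Hence $C_i = \emptyset$ for all such $i$. The first trade condition reduces to a $2 \times 2$ contingency-table count in each row (and each column) of the superposition of $F_1$ and $F_j$, which immediately gives equal numbers of zeros and ones inside $C$. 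The second trade condition expands, after using the defining relation between $F_1$ and $F_j$ on $C$, into the equalities $x_{111} = x_{001}$ (case~2) and $x_{101} = x_{011}$ (case~3), both delivered by Lemma~\ref{l:bsictriple}.

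Finally, I will compute the switched squares by splitting into cells inside and outside $C$ and substituting $F_1 = F_j$ on $C$ (respectively, $F_1 = 1 - F_j$ on $C$). A short check yields $(F_1, F_j) \mapsto (1 - F_j,\, 1 - F_1)$ in case~2, which is the composition of swapping the squares in positions $1$ and $j$ with the symbol-permutation operation applied to each of the two; and $(F_1, F_j) \mapsto (F_j, F_1)$ in case~3, which is merely a permutation of the squares in the set. All other squares are untouched, so the resulting MOFS is isomorphic to the original. No step is deep; the most delicate point will be the cardinality-based argument eliminating $C_i = C$ for $i \notin \{1, j\}$ in cases~2 and~3, where one must combine orthogonality with the exact count $|C| = n^2/2$.
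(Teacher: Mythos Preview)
Your proposal is correct and follows essentially the same approach as the paper's proof: the same orthogonality-based elimination of $C_i=C$ for $i\notin\{1,j\}$ (using that $|C|=n^2/2$ equals the size of any agreement/disagreement set), the same appeal to \lref{l:bsictriple} for the second trade condition, and the same identification of the switched pair as $(F_j,F_1)$ in the disagreement case and $(1-F_j,1-F_1)$ in the agreement case. You are somewhat more explicit than the paper in verifying the first trade condition via a row/column contingency count, but this is a minor presentational difference rather than a different route.
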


\begin{proof}
If we take $C=N(n)\times N(n)$, then $C_1=C$. However, $C_i=\emptyset$
for $i>1$ since $F_i$ cannot agree with $F_1$ on every cell or
disagree with it on every cell, since $F_i$ is orthogonal to $F_1$.
So, in this case the trade simply complements $F_1$ (interchanges
zeros and ones within $F_1$).

Next consider what happens when we take $C$ to be the set of cells on
which $F_1$ disagrees with $F_j$. Since $F_1$ is orthogonal to $F_j$,
$C$ must consist of exactly half of all cells in these squares.
Switching on these cells converts $F_1$ into $F_j$ and vice versa.
Let $i\in N(k)\setminus\{1,j\}$. We know that $F_i$ is orthogonal to
$F_1$ and hence agrees with $F_1$ on exactly half of its cells.  If
$F_i$ disagrees with $F_1$ on every cell in $C$ it would have to equal
$F_j$. Also, if $F_i$ agrees with $F_1$ on every cell in $C$ it would
have to disagree with $F_j$ in every cell. Either option is
impossible, since $F_i$ is orthogonal to $F_j$. So we conclude that
$C_i=\emptyset$. Moreover, \lref{l:bsictriple} ensures that
$|V_{1,1}\cap V_{i,1}|=|V_{1,0}\cap V_{i,1}|$, so the trade is valid, by
\tref{t:basictrades}.
The result of switching on the trade is to interchange $F_1$ and
$F_j$, resulting in an isomorphic set of MOFS.

Taking $C$ to be the set of cells on which $F_1$ agrees with $F_j$
works similarly. Switching on it is equivalent to interchanging $F_1$
and $F_j$ and then complementing both squares.
\end{proof}

\begin{theorem}\label{t:tradesinHadset}
Let $n>2$ and let $\F$ be a set of {\rm MOFS}$(n)$ in which every frequency
square has the property that every pair of rows is either equal or
complementary.  Then $\F$ has at least $(n/2)^4$ basic trades each of which 
produce a new set of {\rm MOFS} that are not isomorphic to $\F$.
\end{theorem}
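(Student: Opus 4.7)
My plan is, for each quadruple $(r_1, r_2, c_1, c_2)$ with $r_1 \in S_1$, $r_2 \in N(n)\setminus S_1$, $c_1 \in N(n)\setminus T_1$, and $c_2 \in T_1$, to set $C = \{r_1,r_2\} \times \{c_1,c_2\}$ and to show both that $C$ induces a basic trade and that switching on it produces a set of MOFS non-isomorphic to $\F$. Here I am using that, by hypothesis, each $F_i$ is determined by a row subset $S_i \subseteq N(n)$ and a column subset $T_i \subseteq N(n)$, each of size $n/2$ (column balance forces the size), with $F_i[r,c]=1$ if and only if $\chi_{S_i}(r)=\chi_{T_i}(c)$. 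With this parameterisation, $F_1|_C$ is the anti-diagonal pattern $F_1[r_1,c_1]=F_1[r_2,c_2]=0$ and $F_1[r_1,c_2]=F_1[r_2,c_1]=1$, which is row- and column-balanced on $C$, so the first condition of \tref{t:basictrades} is automatic.

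For the second condition I would consider each $F_j$ with $j\geq 2$: since rows $r_1$ and $r_2$ of $F_j$ are either equal or complementary, $F_j|_C$ is either constant down each of its two columns (rows equal) or has complementary rows. A short case check shows that whenever $F_j|_C$ is neither the anti-diagonal pattern nor its complement -- so that $C_j=\emptyset$ -- one gets $|V_{1,1}\cap V_{j,1}|=|V_{1,0}\cap V_{j,1}|$; in the remaining case $C_j=C$ and nothing further is required. So each $C$ induces a valid basic trade, and distinct quadruples produce distinct $C$ (one recovers $r_1$ as the unique element of $\{r_1,r_2\}\cap S_1$ and $c_2$ as the unique element of $\{c_1,c_2\}\cap T_1$), giving the required $(n/2)^4$ trades.

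The main obstacle is showing that each resulting switched set $\F'$ is not isomorphic to $\F$. I would use the invariant that every square of the set has every pair of rows either equal or complementary. This is clearly preserved by row and column permutations, symbol permutations (which swap the two row patterns in each square), and relabelling of the squares; the key point is that it is also preserved by transposition, since every $F_i$ of the described form has its columns equal to $\chi_{S_i}$ or to its complement, so the condition holds dually for columns. Since $n\geq 4$, there exists $r'_1\in S_1\setminus\{r_1\}$; after switching, rows $r'_1$ and $r_1$ of $F_1'$ agree outside $\{c_1,c_2\}$ and disagree at both $c_1$ and $c_2$, so they are neither equal nor complementary. Therefore $F_1'$ violates the invariant, and $\F'\not\cong\F$.
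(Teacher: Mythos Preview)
Your proof is correct and follows essentially the same approach as the paper's: both select the $(n/2)^4$ ``intercalates'' of $F_1$ (your quadruples $(r_1,r_2,c_1,c_2)$ are exactly the $2\times2$ submatrices meeting all four blocks of $F_1$), verify the two conditions of \tref{t:basictrades} via the same case analysis on whether rows $r_1,r_2$ of $F_j$ are equal or complementary, and then observe that the switched $F_1'$ has a pair of rows that are neither equal nor complementary. Your treatment is in fact slightly more explicit than the paper's in two places: you spell out why the $(n/2)^4$ trades are pairwise distinct, and you justify why the row property is an isomorphism invariant (in particular under transposition), whereas the paper asserts the non-isomorphism without further comment.
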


\begin{proof}
Up to permutations of the rows and columns each square in $\F$ has
block structure~\eref{e:blocks}. Choose $C$ to be any of the
``intercalates'' in $F_1$, that is, $2\times2$ submatrices that meet
all four blocks in $F_1$. There are $(n/2)^4$ choices for $C$. We
claim each of them produces a basic trade. It is obvious that each row
and column of $F_1$ has the same number of zeros and ones in cells in
$C$. Moreover, in each $F_i$ the number of ones that occur in cells in
$C$ must be even, since the two rows that meet $C$ are either equal or
complementary. If $C$ induces an identity matrix or its complement in
$F_i$, then $C_i=C$.  In the other squares, $C$ must induce a matrix
with either constant rows or constant columns, and
$C_i=\emptyset$. Either case satisfies the second condition
in~\tref{t:basictrades}.

After switching on the trade, $F_1$ becomes a matrix with two rows
that are neither equal nor complementary, so the new set of MOFS is
not isomorphic to the original set.
\end{proof}

\begin{theorem}\label{t:manyMOFS}
For any $n$ for which there exists a Hadamard matrix of order $n$
there exists at least $2^{n^2/4-O(n\log n)}$ isomorphism classes of 
complete {\rm MOFS}$(n)$.
\end{theorem}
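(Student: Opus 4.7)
The plan is to combine~\tref{t:Hadamard} with~\tref{t:tradesinHadset}. Starting from a Hadamard matrix $H$ of order $n$, the Federer construction produces a complete MOFS $\mathcal{F}=\{F_1,\dots,F_k\}$ with $k=(n-1)^2$, in which each square is given explicitly by a formula of the form $F_{r,c}[i,j]=(1-H[r,j]H[i,c])/2$ for pairs $(r,c)$ of non-trivial row/column indices of $H$. This formula implies that every pair of rows of each $F_{r,c}$ is either equal or complementary (and likewise for columns), so the hypothesis of~\tref{t:tradesinHadset} is satisfied. After relabelling the rows and columns so that $F_1$ takes the block form~\eref{e:blocks}, I would select the $(n/2)^2$ pairwise cell-disjoint intercalates $I_{a,b}=\{(a,b),(a,n/2+b),(n/2+a,b),(n/2+a,n/2+b)\}$ indexed by $(a,b)\in N(n/2)\times N(n/2)$, which arise from matching row $a$ with row $n/2+a$ and column $b$ with column $n/2+b$.

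For each subset $S\subseteq N(n/2)\times N(n/2)$, switch the basic trades associated with $\{I_{a,b}:(a,b)\in S\}$ one at a time. Because the intercalates are pairwise cell-disjoint, each successive switch remains a valid basic trade of the current MOFS: the row/column balance condition on $F_1$ is unaffected outside the current intercalate, and the second condition of~\tref{t:basictrades} transfers from the previous step because each square agrees with its pre-switch counterpart on cells outside the previously switched intercalates. The resulting MOFS $\mathcal{F}_S$ is independent of the order of switching. The top-left $(n/2)\times(n/2)$ block of $F_1$ in $\mathcal{F}_S$ equals the indicator matrix of $S$, so distinct subsets yield distinct $\mathcal{F}_S$, giving $2^{n^2/4}$ complete MOFS of order~$n$.

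It remains to bound by $2^{O(n\log n)}$ the number of $S'\subseteq N(n/2)\times N(n/2)$ with $\mathcal{F}_{S'}\cong\mathcal{F}_S$. An isomorphism of MOFS decomposes as a triple $(\tau,\sigma,\rho)$, where $\tau$ is a geometric operation (row permutation, column permutation and optional transposition), $\sigma$ is a tuple of symbol swaps on the individual squares, and $\rho$ permutes the squares. The geometric group has order $2(n!)^2=2^{O(n\log n)}$. A short computation using the orthogonality of distinct non-trivial rows of $H$ shows that the original squares $F_{r,c}$ are pairwise distinct and no two are complementary. Hence, for a fixed $\tau$, matching the multiset $\{\tau(F_i^{(S)})\}$ against $\{F_{\rho(i)}^{(S')}\}$ (up to per-square complementation) essentially forces $\rho$, $\sigma$, and therefore $S'$. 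The orbit of $\mathcal{F}_S$ within the family $\{\mathcal{F}_{S'}\}$ consequently has size at most $2^{O(n\log n)}$, producing at least $2^{n^2/4}/2^{O(n\log n)}=2^{n^2/4-O(n\log n)}$ isomorphism classes. The main obstacle is making the "essentially forces" step precise when $S$ is large: perturbation could in principle make two of the $F_i^{(S)}$ coincide or become complementary, enlarging an orbit, and one must verify that such pathological subsets are few enough not to spoil the asymptotic count.
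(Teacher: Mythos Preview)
Your overall strategy matches the paper's: start from the Federer/Hadamard complete set, switch subsets of pairwise cell-disjoint intercalate trades, and divide by the size of the relevant symmetry group. Your iterated-switching argument is fine (disjointness of the intercalates means that the hypotheses of \tref{t:basictrades} on a fresh $I_{a',b'}$ are unaffected by previous switches). The gap you yourself flag at the end, however, is real, and the paper sidesteps it with one small trick you are missing.

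The paper restricts its trades to the intercalates $I_{r,c}$ with $2\le r\le n/2$ and $2\le c\le n/2$, deliberately \emph{avoiding} row~$1$ and column~$1$. This costs only a factor of $2^{O(n)}$ in the raw count, which is negligible. What it buys is that the first row and first column of \emph{every} square in $\F_S$ are untouched by every switch. In the Federer construction those data uniquely identify each square: with $H$ normalised, $F_{r,c}[1,j]=(1-H[r,j])/2$ depends only on $r$ and $F_{r,c}[i,1]=(1-H[i,c])/2$ depends only on $c$, and distinct rows (columns) of $H$ give distinct patterns. Hence the squares in $\F_S$ are always pairwise distinct and pairwise non-complementary, and for $S\ne S'$ the \emph{unordered} sets $\F_S$ and $\F_{S'}$ cannot coincide (match squares by their first row/column; then $F_1^{(S)}\ne F_1^{(S')}$). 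Moreover every $\F_S$ is already standardised ($F_i(1,1)=0$), and an isomorphism class contains at most $2(n!)^2=e^{O(n\log n)}$ standardised unordered sets, since after choosing the row/column permutation and optional transpose the complementations are forced and permuting the squares does nothing to an unordered set. This eliminates precisely the ``$\rho$ is essentially forced'' step where your argument stalls; with row~$1$ and column~$1$ preserved, no appeal to genericity of $S$ is needed.
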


\begin{proof}
By~\tref{t:Hadamard}, from a Hadamard matrix we can construct a
complete MOFS of the same order. The construction used to prove
that theorem ensures that every square in that set has the property
that each pair of rows is equal or complementary.  Any such square is
determined by its first row and first column. We assume, without loss
of generality, that the first square has block structure
\eref{e:blocks}.
 
Say that a complete set $\{F_1,\dots,F_{(n-1)^2}\}$ of MOFS$(n)$ is
\emph{standardised} if 
$F_i(1,1)=0$ for $1\le i\le(n-1)^2$.
An isomorphism class of MOFS contains $e^{O(n\log n)}$ standardised MOFS
since the rows and columns can be permuted in $n!^2=e^{O(n\log n)}$ ways
and then there is a unique way to standardise the set by complementing
any squares that have a one in their $(1,1)$ cell.

We make use of the basic trades described in~\tref{t:tradesinHadset}.
Consider the set of trades $\T=\{T_{r,c}:2\le r\le n/2,\,2\le c\le
n/2\}$, where $T_{r,c}$ uses the cells
$$C=\big\{(r,c),(r+n/2,c),(r,c+n/2),(r+n/2,c+n/2)\big\}.$$ 
Note that $T_{r,c}$ and $T_{r',c'}$
are on disjoint sets of cells unless $r=r'$ and $c=c'$. Hence we can
switch on any subset of $\T$ to obtain a new complete set of MOFS.  As
we have preserved the first row and column of each square, and these
are unique to that square, we cannot produce two sets of MOFS that are
the same but have their squares in a different order. Also, every set
that we produce is standardised, so we have built
$2^{|\T|}=2^{n^2/4-O(n)}$ standardised complete MOFS. The
result now follows since the number of isomorphism classes is at least
$2^{n^2/4-O(n)}/e^{O(n\log n)}=2^{n^2/4-O(n\log n)}$.
\end{proof}

\section{Computational results}\label{s:comp}

In this section we report the results of a computational exploration
of maximality among sets of MOFS of small orders. Our results were each
obtained by two independently written programs. The computations took
several months of CPU time. In order to present sets of MOFS more compactly
we adapt the notation used earlier. Rather than just
superimposing the squares as we did in~\eref{e:5maxMOFSrel}, we
superimpose them and then convert the resulting entries from binary
to decimal. For example, the first row of~\eref{e:5maxMOFSrel}
would be written as [27,23,12,1,2,28] rather than
[11011,10111,01100,00001,00010,11100].

All MOFS$(4)$ extend to a complete set, so there are no maximal
MOFS$(4)$ that are not complete. There are three different complete
MOFS$(4)$, up to isomorphism \cite{CRC}.  One of the 3 sets is related
to the other two by basic trades that switch 4 cells in 4 squares:
\begin{equation}\label{e:tradMOFS4}
\left[
\begin{array}{cccc}
511&448&21&42\\
76&115&410&421\\
259&316&\mk233&\mk214\\
176&143&\mk358&\mk345
\end{array}
\right]
\qquad\qquad
\left[
\begin{array}{cccc}
511&448&21&42\\
76&115&\mk410&\mk421\\
259&316&233&214\\
176&143&\mk358&\mk345
\end{array}
\right]
\end{equation}
A basic trade on the cells highlighted in the left hand copy above
changes the last four frequency squares in the set.  A basic trade on
the cells highlighted in the right hand copy above changes the 4th,
5th, 6th and 7th frequency squares in the set.  Both basic trades
produce MOFS that are not isomorphic to each other or the initial set
of MOFS. There is no single basic trade that switches between the
isomorphism classes of the MOFS that result from the two basic trades
highlighted in~\eref{e:tradMOFS4}. The MOFS produced by the trade
shown on the right in \eref{e:tradMOFS4} are the ones produced from
\tref{t:Hadamard}. They are the only ones in which every frequency
square consists of rows which pairwise are either equal or complementary.

Up to equivalence, there are 6 frequency squares of type $F(6;3)$ and
2435 pairs of MOFS of the same type. For each pair $P$ we found and
stored every ``mate'' that allows the pair to extend to a triple. The
number of mates ranged from 5937 to 7413. A graph $\Gamma_{\!P}$ was
then constructed with the mates as its vertices, and edges indicating
orthogonality.  It was easily observed that every vertex in
$\Gamma_{\!P}$ had positive degree (indeed, the minimum degree ranged
from 548 to 1369) and that every edge was in a triangle. It follows
that aside from the unique bachelor (see Theorem \ref{t:bach}), there
are no maximal MOFS of type $F(6;3)$ containing fewer than 5
squares. There are a large number of maximal sets of 5 squares. By the
above method we generated $577\,418\,387$ (respectively, $1475$) 
$5$-maxMOFS$(6)$ that do (respectively, do not) satisfy a relation.
We did not store the former so we cannot say how many isomorphism
classes they represent. However, we did store the 1475 examples of
$5$-maxMOFS$(6)$ that do not satisfy a relation, and these come from
130 isomorphism classes.  The most symmetric $5$-maxMOFS$(6)$ has an
automorphism group of order 10. A representative of that class
follows. Its $\Z_2$-sum is the identity matrix.
\begin{equation}\label{e:5maxMOFSnorel}
\left[
\begin{array}{cccccc}
31&17&18&0&15&12\\
17&21&30&6&9&10\\
18&30&8&29&3&5\\
0&6&29&7&24&27\\
15&9&3&24&22&20\\
12&10&5&27&20&19
\end{array}
\right]
\end{equation}
One of the $5$-maxMOFS$(6)$ satisfying a relation was given
in~\eref{e:5maxMOFSrel}.

Next we used an elementary backtracking search to locate all cliques
of size 15 or more in~$\Gamma_{\!P}$. 
Each $k$-clique of $\Gamma_{\!P}$ corresponds to $(k+2)$-MOFS$(6)$. 
Of the 2435 pairs of MOFS$(6)$, there were 842 pairs that extended to 
a $17$-MOFS$(6)$ and no pair extended further. 
We conclude that the largest set of MOFS$(6)$ has cardinality 17. 
There are 18 sets of $17$-MOFS$(6)$ up to isomorphism. 
We now present these 18 sets, starting with this example:
\[
\left[
\begin{array}{cccccc}
72128& \MK91655& 44068& \MK53560& \mk731& \mk131071\\ 
115574& 15266& 58249& 15454& 87221& 101449\\ 
46877& 54474& \mk129267& \MK84231& \MK76344& \mk2020\\ 
107666& 39541& 86604& 28625& 52654& 78123\\ 
24107& 100765& \mk1395& 107246& \mk130880& 28820\\ 
26861& \MK91512& 73630& 104097& \MK45383& 51730
\end{array}
\right]
\]
The 6 lightly shaded cells indicate a trade which changes the first 6
squares in the set, and the 6 darkly shaded cells indicate a trade
which changes the last 6 squares in the set.  Both trades result in
non-isomorphic $17$-MOFS$(6)$.

Similarly, the following matrices represent $17$-MOFS$(6)$ where
the shading shows 6 cells where there is a trade that changes the
first 6 frequency squares and leaves the other 11 unchanged.
Each such trade leads to a non-isomorphic $17$-MOFS$(6)$.
\[
\left[
\begin{array}{cccccc}
94304&36518&26456&37249&67615&131071\\
60811&119066&11061&13558&120429&68288\\
\mk107059&80348&130181&\mk24122&1513&49990\\
\mk23468&25671&37595&117109&81666&\mk107704\\
71639&47721&82094&\mk108300&59890&\mk23569\\
35932&83889&105826&92875&62100&12591
\end{array}
\right]
\]
\[
\left[
\begin{array}{cccccc}
7199&11745&123152&98958&21088&131071\\
110272&119623&6837&38266&92315&25900\\
61612&88540&\mk79467&\mk52305&110390&899\\
120753&41015&47562&92781&2012&89090\\
67942&\mk79384&\mk53005&31666&62663&98553\\
25435&\mk52906&83190&\mk79237&104745&47700
\end{array}
\right]
\]
\[
\left[
\begin{array}{cccccc}
25649&117472&70400&41438&7183&131071\\
47762&83118&59717&70263&\mk54648&\mk77705\\
89554&13765&124446&\mk54185&\mk76409&34854\\
110765&31324&18875&\mk77154&100244&\mk54851\\
36716&104731&14058&20117&123075&94516\\
82767&42803&105717&130056&31654&216
\end{array}
\right]
\]

In the next example the $17$-MOFS$(6)$ has two trades which lead to
non-isomorphic $17$-MOFS$(6)$. The first trade consists of switching the
first and last rows of the first square in the set. The second trade
consists of switching the first and last columns of the first square
in the set.
\[
\left[
\begin{array}{cccccc}
105968&128645&75835&27623&53258&1884\\
69126&32895&29592&98689&131071&31840\\
94557&12170&50789&122426&73952&39319\\
21283&124242&105164&19673&43828&79023\\
59598&72553&83382&46380&5843&125457\\
42681&22708&48451&78422&85261&115690
\end{array}
\right]
\]

Similarly, in the following two examples there is a trade consisting of
switching the first and last rows of the first square in the set:
\[
\left[
\begin{array}{cccccc}
125836&97351&66937&6902&46890&49297\\
32895&100227&31152&26176&71692&131071\\
8097&127032&41678&56607&91893&67906\\
113233&21466&117926&82733&11419&46436\\
87274&36404&78615&110040&53701&27179\\
25878&10733&56905&110755&117618&71324
\end{array}
\right]
\]
\[
\left[
\begin{array}{cccccc}
96320&74526&120239&6329&53090&42709\\
32895&126601&8084&119634&76268&29731\\
26544&103540&43595&77095&86683&55756\\
98689&19706&96885&18253&45734&113946\\
131071&39713&66754&59542&13657&82476\\
7694&29127&57656&112360&117781&68595\\
\end{array}
\right]
\]
Finally, we present two more $17$-MOFS$(6)$ which have no basic trades other
than those of the type covered by~\lref{l:trivtrades}.
\[
\left[
\begin{array}{cccccc}
65567&67553&62976&51608&14438&131071\\
89932&37271&10972&95538&42667&116833\\
47401&53876&109907&90797&87258&3974\\
107764&27950&17275&7361&129941&102922\\
29635&125130&103852&40575&68400&25621\\
52914&81433&88231&107334&50509&12792
\end{array}
\right]
\]
\[
\left[
\begin{array}{cccccc}
65567&51334&30488&47201&67552&131071\\
97457&26213&41298&119452&40399&68394\\
123843&81164&39861&1663&28842&117840\\
55078&19417&88171&107913&109108&13526\\
40520&111354&77447&19890&86357&57645\\
10748&103731&115948&97094&60955&4737
\end{array}
\right]
\]
This completes the specification of the 18 isomorphism classes of
$17$-MOFS$(6)$. No pair of these classes is connected by basic
trades unless our description specified such a relationship.

All 18 sets of $17$-MOFS$(6)$ satisfy a $(3,3)$-relation,
thereby demonstrating that they are maximal by~\tref{t:relmax}. 
We next consider the relations satisfied by $k$-MOFS$(6)$ for
$1<k<17$. By~\tref{t:no2or3mod4}, we need only consider $k\in\{5,9,13\}$.
Also, by~\tref{t:restrictrel}, when $k\in\{5,13\}$ 
we only need to consider $(r,s)$-relations where 
\begin{equation}\label{e:reltype5mod8}
(r,s)\in\{(1,3),(3,1),(3,5),(5,3)\}.
\end{equation}
A $k$-MOFS$(6)$ having any of the
relations in~\eref{e:reltype5mod8} can easily be converted into an isomorphic
set satisfying any of the the other 3 kinds of relations. Transposing all
the squares in a set of MOFS interchanges $r$ and $s$, and complementing
both $X_1$ and $X_3$ transforms a set of MOFS with an $(r,s)$-relation into 
one with an $(n-r,n-s)$-relation. Thus the only question is whether
there exists a set with any of the relations in~\eref{e:reltype5mod8} or not. 
We have already demonstrated a $5$-maxMOFS$(6)$ 
satisfying a $(5,3)$-relation in~\eref{e:5maxMOFSrel}.
Similarly, here are $13$-maxMOFS$(6)$ satisfying a
$(5,3)$-relation. The shaded cells indicate a basic trade which can be
switched to reach $13$-maxMOFS$(6)$ that do not satisfy any relation.
\begin{equation}\label{e:13maxMOFSnorel}
\left[
\begin{array}{cccccc}
\mk4095&4196&\mk4539&\mk3587&\mk7708&448\\
1576&2266&7495&\mk4847&4881&\mk3508\\
2181&7289&1910&5266&\mk1001&\mk6926\\
6442&1923&6832&2397&1126&5853\\
\mk5457&\mk798&3784&5548&2231&6755\\
4822&\mk8101&\mk13&2928&7626&1083
\end{array}
\right]
\end{equation}
Switching the trade only changes the first square in the set of MOFS.

For $9$-MOFS$(6)$ there were more possibilities, {\it a priori}.
By~\tref{t:restrictrel}, we need to consider $(r,s)$-relations
where 
\begin{equation}\label{e:reltype1mod8}
(r,s)\in\{(1,1),(5,5)\}\cup\{(1,5),(5,1)\}\cup\{(3,3)\}.
\end{equation}
Here we have partitioned the possibilities into sets of relations that
can be transformed into each by the moves described above.  As already
noted, only the last possibility is achieved by $17$-MOFS$(6)$.
However, there are $9$-MOFS$(6)$ achieving all of the options
in~\eref{e:reltype1mod8}.  We start by giving $9$-MOFS$(6)$ with a
$(1,1)$-relation then $9$-MOFS$(6)$ with a $(1,5)$-relation:
\[
\left[
\begin{array}{cccccc}
284&511&259&4&224&251\\
433&288&335&126&154&197\\
206&338&444&483&9&53\\
457&108&113&27&438&390\\
55&135&170&464&365&344\\
98&153&212&429&343&298
\end{array}
\right]
\qquad\qquad
\left[
\begin{array}{cccccc}
257&270&18&228&249&511\\
333&419&127&148&194&312\\
436&220&481&11&47&338\\
110&471&21&426&409&96\\
179&56&458&375&324&141\\
218&97&428&345&310&135
\end{array}
\right]
\]
Next we give $9$-maxMOFS$(6)$ that satisfy a $(3,3)$-relation. 
\begin{equation}\label{e:9maxMOFSnorel}
\left[
\begin{array}{cccccc}
449&106&180&307&93&398\\
180&449&106&398&307&93\\
106&180&449&93&398&307\\
511&7&280&169&210&356\\
280&511&7&356&169&210\\
7&280&511&210&356&169
\end{array}
\right]
\end{equation}
If the first and last rows of the first square
in~\eref{e:9maxMOFSnorel} are switched, then the result is a
$9$-maxMOFS$(6)$ that do not satisfy any relation.

In~\eref{e:5maxMOFSnorel},~\eref{e:9maxMOFSnorel}
and~\eref{e:13maxMOFSnorel} we have described $k$-maxMOFS$(6)$ for
$k\in\{5,9,13\}$ that do not satisfy a relation.  In fact we found
$k$-maxMOFS$(6)$ for all $5\le k\le 15$ that do not satisfy a
relation.  There are no $16$-maxMOFS$(6)$.  Here we give
$k$-maxMOFS$(6)$ for $k\in\{6,7,8,10,11,12,14,15\}$:
\[
\begin{array}{ccc}
\left[
\begin{array}{cccccc}
63&33&34&1&28&30\\
36&45&50&14&19&25\\
43&54&25&60&0&7\\
8&10&61&7&54&49\\
23&21&4&56&43&42\\
16&26&15&51&45&36
\end{array}\right]
&
\left[
\begin{array}{cccccc}
127&64&67&5&56&62\\
73&84&106&31&36&51\\
86&111&53&112&11&8\\
25&26&124&6&99&101\\
32&45&14&121&87&82\\
38&51&17&106&92&77
\end{array}\right]
&\left[
\begin{array}{cccccc}
255&128&129&14&114&125\\
145&175&214&58&76&97\\
170&220&107&229&19&20\\
53&54&248&9&199&202\\
66&89&31&240&173&166\\
76&99&36&215&184&155
\end{array}\right]
\\[17mm]\hspace*{-1.1mm}
\left[
\begin{array}{@{\,}c@{\;\,}c@{\;\,}c@{\;\,}c@{\;\,}c@{\;\,}c@{\,}}
1023&513&526&51&452&504\\
584&698&855&230&305&397\\
677&884&427&920&79&82\\
219&213&992&44&794&807\\
256&363&124&963&701&662\\
310&398&145&861&738&617
\end{array}\right]
&
\left[
\begin{array}{@{\,}c@{\;\,}c@{\;\,}c@{\;\,}c@{\;\,}c@{\;\,}c@{\,}}
2047&1025&1038&119&920&992\\
1200&1378&1691&462&597&813\\
1372&1773&850&1841&170&135\\
451&413&1956&56&1638&1627\\
516&762&245&1923&1353&1342\\
555&790&361&1740&1463&1232
\end{array}\right]
&
\left[
\begin{array}{@{\,}c@{\;\,}c@{\;\,}c@{\;\,}c@{\;\,}c@{\;\,}c@{\,}}
4095&2055&2105&194&1864&1972\\
2448&2788&3407&798&1137&1707\\
2794&3384&1747&3909&421&30\\
769&891&3732&253&3210&3430\\
1063&1490&492&3624&2975&2641\\
1116&1677&802&3507&2678&2505
\end{array}\right]
\end{array}\hspace*{-1.1mm}
\]
\[
\left[
\begin{array}{cccccc}
16383&8207&8433&1814&6432&7880\\
8746&11717&13136&3512&5743&6295\\
11284&14248&7203&15067&965&382\\
2788&4754&15694&1097&10683&14133\\
4441&3955&2958&14885&13468&9442\\
5507&6268&1725&12774&11858&11017
\end{array}\right]
\qquad
\left[
\begin{array}{cccccc}
32767&16415&16865&3622&13896&14736\\
22147&19288&31844&4602&11141&9279\\
22568&26566&12819&32157&2294&1897\\
11123&7461&25276&4813&21778&27850\\
1236&15018&3467&26416&29039&23125\\
8460&13553&8030&26691&20153&21414
\end{array}\right]
\]

For orders larger than 6 it is not feasible to do exhaustive
computations.  However, we did a partial enumeration of MOFS$(10)$
inspired by the example in~\eref{e:9maxMOFSnorel}. The idea was to
impose a block circulant structure similar to that example. Each
square was assumed to be composed of 4 circulant blocks. Under
this (strong) assumption, we found that the largest set of MOFS$(10)$
that is possible has size 17. Every such example 
satisfies a $(5,5)$-relation, and hence is maximal by~\tref{t:relmax}.
The first and sixth rows of one such example are
\[
\left[
\begin{array}{cccccccccc}
52452&86882&89113&107209&108822&26453&27322&38362&39725&79015\\
131071&127&3971&29068&46640&63555&72404&77160&115121&116238
\end{array}
\right].
\]
In light of \tref{t:restrictrel}, the only other $k$ for which we
might hope to find a block circulant $k$-MOFS$(10)$ satisfying
a $(5,5)$-relation are $k=1$ and $k=9$.  The former case is rather
trivially handled by \tref{t:bach}, whilst for $k=9$ we did find a
(necessarily maximal) set with the following first and sixth rows
\[
\left[
\begin{array}{cccccccccc}
210&332&353&404&427&110&117&157&162&283\\
511&1&14&55&248&201&312&338&420&455
\end{array}
\right].
\]

\section{Embeddings}\label{s:embed}

As discussed in the introduction, the Hadamard conjecture implies the
existence of complete MOFS of type $F(n;n/2)$ whenever $n$ is
divisible by $4$. In this section we explore the case $n\equiv2\pmod4$ 
via embeddings of MOFS, building on the computational results
in the previous section.  The following lower bounds for the number of
binary MOFS of order $n\equiv 2\pmod4$ for small values of $n$ are
given in \cite{LM,LZD}:

\begin{theorem}\label{t:prevbnd}
There exist $k$-{\rm MOFS}$(n)$ whenever $(k,n)$ is an element of
\begin{align*}
\{&(8,6),(4,10),(4,14),(8,18),(4,22),(4,26),(8,30),(4,34),(4,38),(8,42), (5,46),(6,50), \\
&(7,54),(5,58),(6,62),(7,66),(6,70),(7,74),(7,78),(8,82),(6,86),(8,90),(7,94),(6,98)\}.
\end{align*}
\end{theorem}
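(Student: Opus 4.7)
The plan is to exhibit, for each listed pair $(k,n)$, an explicit $k$-MOFS$(n)$, following the cyclic/difference-family constructions of \cite{LM, LZD}. The unifying idea is to build each frequency square so that its rows are cyclic shifts of a single binary base vector of length $n$ with weight $n/2$; a set of $k$ such squares, with base vectors $v_1, \dots, v_k$, forms a $k$-MOFS$(n)$ precisely when each pair $(v_i, v_j)$ satisfies a difference condition that forces each of the four ordered entries $(0,0),(0,1),(1,0),(1,1)$ to appear $n^2/4$ times upon superimposition.

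First, $n=6$ is trivial in light of \sref{s:comp}, since the present paper already exhibits $17$-MOFS$(6)$, from which existence of an $8$-MOFS$(6)$ is immediate. For the remaining values, I would partition the pairs by $k$. The $4$-MOFS entries ($n \in \{10,14,22,26,34,38\}$) use a small number of base vectors in $\Z_n$ governed by a supplementary difference family with parameters matching $n$; the $8$-MOFS entries ($n \in \{18,30,42\}$) use larger collections of base vectors, often arising from $\Z_{n/2} \times \Z_2$ or from Latin squares of order $n/2$ composed with a shift map. For the larger values (the block starting at $n=46$), the constructions are tailored to the factorisation of $n$ and rely on ``patching together'' MOFS of component orders using auxiliary balanced designs.

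The main obstacle is simply bookkeeping: there are $24$ pairs to handle, each requiring either an explicit array or a verification of a difference condition. Each individual case is a routine finite verification, but assembling all of them into a single uniform argument is the genuine difficulty, which is why the cited references handle them in families rather than one at a time. The hardest individual cases are those where $n$ has no convenient small factor (for instance $n=46, 58, 94$), where one cannot rely on a product construction and must instead produce base vectors satisfying the difference condition directly. Since the statement is used here only as a baseline against which the improvements in \sref{s:embed} and \sref{s:max} are measured, the natural course is to cite \cite{LM, LZD} for the explicit arrays and proceed.
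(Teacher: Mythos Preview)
Your proposal ultimately lands in the right place: the paper does not prove this theorem at all but simply quotes it as a summary of prior work, attributing the bounds to \cite{LM,LZD}. So your final sentence---cite the references and proceed---is exactly what the paper does, and is the correct treatment.

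The rest of your proposal is unnecessary and somewhat risky. You sketch a purported unifying mechanism (cyclic shifts of base vectors, difference conditions, product constructions by factorisation of $n$), but you have not verified that this is actually how \cite{LM,LZD} obtain these particular bounds; those papers use a mix of techniques, and your description may not match them. Since the theorem is presented purely as a quotation of existing results, there is no need to reconstruct or even paraphrase the constructions. Also, invoking \sref{s:comp} for the $n=6$ case is slightly off-key: while that section does precede the theorem in the paper, the theorem is explicitly a record of what was known \emph{before} this paper, so appealing to the present paper's own computations to justify the $(8,6)$ entry misrepresents the provenance. The clean version of your ``proof'' is a single line: see \cite{LM,LZD}.
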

\noindent
Theorem~\ref{newbounds} at the end of this section improves each of the lower bounds in \tref{t:prevbnd} to $k=17$. 

Let $s,n$ be positive even integers with $s<n$. We define an
\emph{incomplete frequency square} of type $(n;s)$ to be an
$n\times n$ array $F$, indexed by $N(n)$, such that:
\begin{enumerate}
\item the subarray indexed by $N(s)\times N(s)$ is empty and all
other cells of $F$ contain $0$ or $1$, and
\item each row and column is \emph{balanced} in the sense that
it contains equal numbers of the symbols $0$ and $1$.
\end{enumerate}

Two such incomplete frequency squares are said to be \emph{orthogonal}
if, when superimposed, each of the four possible ordered pairs
$(0,0)$, $(0,1)$, $(1,0)$ and $(1,1)$ occurs the same number of times.
We use the notation $k$-IMOFS$(n;s)$ to denote a set of $k$ incomplete
frequency squares, each pair of which is orthogonal in the above
sense. We will present IMOFS in superimposed format, similar to
\eref{e:5maxMOFSrel}.  Note that similar results to ours below could
be developed for IMOFS with multiple holes; however a single hole is
enough for the purposes of this paper.

In the following, for any binary vector ${\bf r}$, we write
${\bf\overline{r}}$ for the complement of ${\bf r}$, that is, the vector
formed by replacing each entry $e$ with $1-e$. The array $I({\bf r})$
is the $2\times 2$ array defined by
$$
I({\bf r})=
\left[
\begin{array}{cc}
{\bf r} & \overline{\bf r} \\
\overline{\bf r} & {\bf r} \\
\end{array}
\right].
$$
An important property of $I({\bf r})$ is that its rows and columns are balanced.

\begin{lemma}\label{yeahyeah}
If there exists a $2$-{\rm IMOFS}$(n;n-2)$, then the bottom
right-hand corner must be isomorphic to
$$
\left[
\begin{array}{cc}
00 & 01 \\
10 & 11 \\
\end{array}
\right].
$$
\end{lemma}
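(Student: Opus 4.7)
The plan is to dissect the $L$-shape of $4(n-1)$ filled cells into three pieces: the top-right $(n-2)\times 2$ strip $A$ (rows $1,\dots,n-2$, columns $n-1, n$), the bottom-left $2\times(n-2)$ strip $B$, and the bottom-right $2\times 2$ block $C$. Orthogonality forces each of the four ordered pairs in $\{0,1\}^2$ to appear exactly $n-1$ times in the superposition of $F_1$ and $F_2$ on the $L$-shape. The balance condition within each row of $A$ forces $(F_j[i,n-1], F_j[i,n]) \in \{(0,1),(1,0)\}$ for $j=1,2$, so each row of $A$ contributes either one $(0,0)$ and one $(1,1)$ (when the two squares share the row-pattern) or one $(0,1)$ and one $(1,0)$ (when their patterns differ). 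Let $\alpha$ count agreeing rows in $A$ and $\beta = (n-2) - \alpha$ count disagreeing rows; define $\alpha', \beta'$ analogously for the columns of $B$. Writing $X_{ab}$ for the count of $(a,b)$ in $C$, the four orthogonality equations reduce to $X_{00} = X_{11}$, $X_{01} = X_{10}$, and $X_{00} + X_{01} = 2$, leaving $(X_{00}, X_{01}) \in \{(0,2), (1,1), (2,0)\}$.

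The critical next step is a parity argument exploiting the oddness of $n-1$. Let $c_j$ (respectively $r_j$) be the number of zeros of $F_j$ in column $n-1$ (respectively row $n-1$) of $C$. Partition the rows of $A$ by the ordered pattern pair $(F_1\text{-pattern}, F_2\text{-pattern}) \in \{(0,1),(1,0)\}^2$ into four classes with counts $a, b, c, d$; then $\beta = b + c$, while counting zeros of $F_j$ in column $n-1$ above $C$ yields $a+b = n/2 - c_1$ and $a+c = n/2 - c_2$. Thus $b-c = c_2 - c_1$, so $\beta = b+c \equiv b-c \equiv c_1 + c_2 \pmod 2$. Symmetrically, $\beta' \equiv r_1 + r_2 \pmod 2$. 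Reducing $\beta + \beta' + X_{01} = n - 1$ modulo $2$ produces the key congruence
\[c_1 + c_2 + r_1 + r_2 + X_{01} \equiv 1 \pmod 2.\]

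I will use this congruence to eliminate the cases $X_{00} \in \{0, 2\}$. Denote by $Z_j \subset \{1,2\}^2$ the set of zero-positions of $F_j$ in $C$, so $|Z_j| = 2$ and $X_{00} = |Z_1 \cap Z_2|$. If $X_{00} = 2$ then $Z_1 = Z_2$, forcing $c_1 = c_2$ and $r_1 = r_2$, so the left-hand side of the congruence is even. If $X_{00} = 0$ then $Z_2$ is the complement of $Z_1$, so $c_1 + c_2 = r_1 + r_2 = 2$, again even. Both contradict the congruence, so $(X_{00}, X_{01}, X_{10}, X_{11}) = (1, 1, 1, 1)$ and each ordered pair appears exactly once in $C$.

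Finally I classify the shape of each $Z_j$ into three types --- row (both zeros in one row), column (both in one column), or diagonal --- whose parities $(r_j, c_j) \pmod 2$ are $(0,1)$, $(1,0)$ and $(1,1)$ respectively. The three same-type pairings force $|Z_1 \cap Z_2| \in \{0, 2\}$, contradicting $X_{00} = 1$; the key congruence rules out the mixed pairings row-diagonal and column-diagonal; only the row-column pairing survives. After applying the isomorphism operations available for IMOFS (permuting the two squares, complementing symbols in either square, swapping rows $n-1, n$, and swapping columns $n-1, n$), we normalise $F_1$'s block to have both zeros in its top row and $F_2$'s block to have both zeros in its left column, yielding the canonical superposition stated in the lemma. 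The main subtlety will be verifying the parity identity $\beta \equiv c_1 + c_2 \pmod 2$ cleanly; the rest is bookkeeping and a small finite case check.
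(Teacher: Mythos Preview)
Your proof is correct, but it takes a substantially different route from the paper's. The paper works directly with the four superposed pairs $\mathbf{r}\in\{00,01,10,11\}$ and exploits the single observation that in any balanced row or column, $\mathbf{r}$ and $\overline{\mathbf{r}}$ occur equally often. From this it argues by contradiction: if some $\mathbf{r}$ were absent from the $2\times2$ corner $M$, then counting occurrences of $\mathbf{r}$ and $\overline{\mathbf{r}}$ in the two strips forces $\overline{\mathbf{r}}$ to be absent from $M$ too, and then the last column cannot balance $\mathbf{r}$ against $\overline{\mathbf{r}}$ (since $n-1$ is odd). A second short contradiction of the same flavour rules out $\mathbf{r}$ and $\overline{\mathbf{r}}$ sharing a row or column of $M$.

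Your approach instead parametrises the strip $A$ by the four pattern-pairs $(a,b,c,d)$, derives the column-balance identities $a+b=n/2-c_1$ and $a+c=n/2-c_2$, and extracts the parity relation $\beta\equiv c_1+c_2\pmod2$; combined with its row analogue this yields the master congruence $c_1+c_2+r_1+r_2+X_{01}\equiv1$, which you then test against each candidate pair $(Z_1,Z_2)$. This is more computational and requires the auxiliary check that $|Z_j|=2$ (which follows from column balance in $B$), but it is entirely sound. The paper's argument is shorter because the ``$\mathbf{r}$ versus $\overline{\mathbf{r}}$'' symmetry packages your $\alpha/\beta$ bookkeeping and the final type classification into one stroke; your approach, on the other hand, makes the arithmetic obstruction completely explicit and would adapt more mechanically if one wanted to analyse larger holes or more squares.
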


\begin{proof}
The number of filled cells in a pair of IMOFS$(n;n-2)$ is equal to $4n-4$.
Therefore there are an odd number of cells filled with ${\bf  r}$
for each of the four choices of ${\bf r}$.  
Observe that to achieve balance in any row or column, 
the number of occurrences of ${\bf r}$ must equal the
number of occurrences of ${\bf \overline{r}}$ for each possible ${\bf r}$.
Let $M$ be the $2\times 2$ subarray in the bottom right-hand corner.
Suppose, for the sake of contradiction, that there exists an ordered
pair ${\bf r}$ which does not occur in $M$.  Then, without loss of
generality (considering the transpose if necessary), ${\bf r}$ occurs
$\alpha$ times within the first $n-2$ rows where $\alpha$ is odd.
Now, each of the first $n-2$ rows contains ${\bf r}$ if and only if it
contains ${\bf \overline{r}}$.  Thus, ${\bf \overline{r}}$ occurs
$\alpha$ times within the first $n-2$ rows.  Since ${\bf r}$ and
${\bf\overline{r}}$ must occur the same number of times in the final
two columns, ${\bf \overline{r}}$ cannot occur in $M$.  However,
considering the final column, there must be an equal number of
occurrences of ${\bf r}$ and ${\bf \overline{r}}$, contradicting the
fact that $\alpha$ is odd.  Thus each possibility for ${\bf r}$ occurs
in $M$ exactly once.  Next, suppose that ${\bf r}$ and
${\bf\overline{r}}$ occur in the final column of $M$.  Then ${\bf r}$
occurs an even number of times in the first $n-2$ rows.  However,
${\bf r}$ must occur an odd number of times in the first $n-2$
columns. Thus ${\bf r}$ occurs an even number of times altogether, a
contradiction. A similar argument shows that ${\bf r}$ and
${\bf\overline{r}}$ cannot occur in any row or column of $M$, from
which the result follows.
\end{proof}

\begin{lemma} 
There does not exist a $3$-{\rm IMOFS}$(n;n-2)$.
\end{lemma}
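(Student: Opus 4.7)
The plan is to combine the structural constraint from \lref{yeahyeah} on the bottom-right $2\times 2$ corner with a modulo-$2$ analysis of the four border strips. I will label a putative $3$-{\rm IMOFS}$(n;n-2)$ as $F_1,F_2,F_3$ and, for each $s\in\{1,2,3\}$, encode $F_s$ by the column vector $\alpha_s:=(F_s(i,n-1))_{i\le n-2}$, the row vector $\beta_s:=(F_s(n-1,j))_{j\le n-2}$, and the four corner entries $p_s=F_s(n-1,n-1)$, $q_s=F_s(n-1,n)$, $r_s=F_s(n,n-1)$, $t_s=F_s(n,n)$. Balance of the two border rows and two border columns then determines the remaining cells and yields $|\alpha_s|=n/2-(p_s+r_s)$ and $|\beta_s|=n/2-(p_s+q_s)$.

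Counting cells in the superposition of $F_r$ and $F_s$ where both squares contain a zero, and using \lref{yeahyeah} to see that the corner contributes exactly one such cell, gives
\[
d_H(\alpha_r,\alpha_s)+d_H(\beta_r,\beta_s)=n-2
\]
for every pair $r\ne s$, where $d_H$ denotes Hamming distance. Applying \lref{yeahyeah} to each of the three pairs $\{F_r,F_s\}$ also shows that the four corner triples $(p_1,p_2,p_3),(q_1,q_2,q_3),(r_1,r_2,r_3),(t_1,t_2,t_3)$ form an orthogonal array of strength $2$ and index $1$ over $\{0,1\}^3$. Such an array is essentially unique: the four triples form either the even-weight set $\{000,011,101,110\}$ or the odd-weight set $\{001,010,100,111\}$. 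In either case, the $\Z_2$-sum of any two of the four triples lies in the even-weight code $\{000,011,101,110\}$.

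Now I reduce the distance equation modulo $2$, using $d_H(u,v)\equiv|u|+|v|\pmod 2$ together with the balance formulas. The $p$-contributions cancel, leaving
\[
(q_r+r_r)+(q_s+r_s)\equiv 0\pmod 2
\]
for every pair $r\ne s$. So $q_s+r_s$ has the same parity for all $s$, which means that the $\Z_2$-sum of the NE corner triple $(q_1,q_2,q_3)$ with the SW corner triple $(r_1,r_2,r_3)$ equals either $000$ or $111$. But these two corner triples are distinct, so by the previous paragraph their $\Z_2$-sum lies in the even-weight code and is not $000$; hence it would have to be $111$, which is odd-weight, a contradiction. The conceptual heart of the argument is recognising the OA structure of the corner; once that is identified, the concluding parity cancellation is short.
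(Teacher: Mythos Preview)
Your proof is correct, but it takes a considerably longer route than the paper's. The paper normalises the $(F_1,F_2)$ corner to $\left[\begin{smallmatrix}00&01\\10&11\end{smallmatrix}\right]$, so the full corner reads $\left[\begin{smallmatrix}000&01a\\10b&11c\end{smallmatrix}\right]$; applying \lref{yeahyeah} to the pair $(F_1,F_3)$ then forces $a=c=1$, while applying it to $(F_2,F_3)$ forces $a=0$, and that is the whole argument. The crucial difference is that the paper uses the \emph{full} content of \lref{yeahyeah}: not only does each ordered pair occur exactly once in the corner (the OA property you isolate), but complementary pairs must lie on opposite diagonals of the $2\times2$ block. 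That positional information is what pins down $a$ and $c$ individually. You deliberately use only the OA consequence, which by itself is not contradictory, and then compensate by bringing in the border strips $\alpha_s,\beta_s$ and a mod~$2$ Hamming-distance count to finish. Your argument does show that the result survives even if one forgets the diagonal constraint in \lref{yeahyeah}, which is mildly interesting, but the paper's three-line corner argument is the natural proof here.
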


\begin{proof}
From \lref{yeahyeah}, without loss of generality the bottom right-hand
corner must be isomorphic to:
$$
\left[
\begin{array}{cc}
000 & 01a \\
10b & 11c \\
\end{array}
\right]
$$ 
Considering the first and third IMOFS in light of \lref{yeahyeah},
we must have $a=c=1$. However, considering the second and
third IMOFS, we get $a=0$, a contradiction.
\end{proof}

We now turn our attention to existence results. 

\begin{lemma}
There exists a pair of {\rm IMOFS}$(n;n-2)$ for each even $n\geq 4$.
\end{lemma}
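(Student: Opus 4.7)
The plan is to construct an explicit pair of IMOFS$(n;n-2)$. By \lref{yeahyeah}, I may assume (up to isomorphism) that the bottom-right $2\times 2$ corner of the superposition realises each ordered pair in $\{0,1\}^2$ exactly once; concretely, I will fix the corners of $F_1$ and $F_2$ to be $\left[\begin{smallmatrix}0&0\\1&1\end{smallmatrix}\right]$ and $\left[\begin{smallmatrix}0&1\\0&1\end{smallmatrix}\right]$, respectively, so that the combined corner contains the four pairs $(0,0),(0,1),(1,0),(1,1)$.

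Every row $i\le n-2$ of the L-shape has only two filled cells (in the last two columns), and similarly every column $j\le n-2$ has only two filled cells; balance in these short rows and columns therefore forces the two filled entries to be complementary in each IMOFS. Consequently, the pair is determined by four binary vectors of length $n-2$: vectors $\mathbf{a},\mathbf{b}$ recording the entries of $F_1,F_2$ in row $n-1$ over columns $j\le n-2$, and vectors $\mathbf{c},\mathbf{d}$ recording the entries of $F_1,F_2$ in column $n-1$ over rows $i\le n-2$. Row and column balance in rows $n-1,n$ and columns $n-1,n$ then translate into the sum constraints $\sum_j a_j=\sum_i d_i=n/2$ and $\sum_j b_j=\sum_i c_i=n/2-1$.

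The main step is to analyse orthogonality. I will do a direct count over the $4n-4$ filled cells. Because row $n$ and column $n$ are bitwise complements of row $n-1$ and column $n-1$ respectively, the total number of $(0,0)$-pairs always equals that of $(1,1)$-pairs, and likewise the number of $(0,1)$-pairs always equals that of $(1,0)$-pairs. Hence the four orthogonality equations collapse to a single one, which after substituting the sum constraints simplifies to
\[
|\{j:a_j=b_j=1\}|+|\{i:c_i=d_i=1\}|=n/2.
\]
This pair-counting reduction is where the bulk of the work lies; it is entirely routine but requires careful bookkeeping across the corner, the two vertical tail cells, and the two horizontal tail cells.

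To close the proof, I would exhibit explicit vectors realising the overlap equation. Writing $m=n/2\ge 2$, the choice $\mathbf{a}=(1^m,0^{m-2})$, $\mathbf{b}=(1,0^{m-1},1^{m-2})$, $\mathbf{c}=(1^{m-1},0^{m-1})$, $\mathbf{d}=(1^m,0^{m-2})$ satisfies all four sum constraints and gives overlap counts $1$ and $m-1$, summing to $m=n/2$ as required. Plugging these vectors back into the L-shape therefore yields a valid pair of IMOFS$(n;n-2)$ for every even $n\ge 4$.
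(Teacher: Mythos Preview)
Your proof is correct, but the approach differs from the paper's. The paper simply exhibits an explicit $2$-IMOFS$(4;2)$ and then, for $n\ge6$, places this $4\times4$ configuration in the bottom-right corner and tiles the remaining cells of the last two rows with copies of $I(00)$ and of the last two columns with copies of $I(01)$; balance and orthogonality are then immediate from the properties of the $I(\mathbf r)$ blocks. Your route instead parametrises all candidate pairs by the four border vectors $\mathbf a,\mathbf b,\mathbf c,\mathbf d$, reduces balance to the four weight conditions and orthogonality to the single overlap identity $|\{j:a_j=b_j=1\}|+|\{i:c_i=d_i=1\}|=n/2$, and then writes down explicit vectors. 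What your approach buys is a structural description of \emph{all} $2$-IMOFS$(n;n-2)$ with the prescribed corner; what the paper's approach buys is brevity and consistency with the $I(\mathbf r)$-tiling machinery used in the rest of the section (notably \tref{mbed}).

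One small wording issue: your justification that ``row $n$ and column $n$ are bitwise complements of row $n-1$ and column $n-1$'' is not literally true position-by-position through the corner (for instance, in $F_2$ the two corner rows are $(0,1)$ and $(0,1)$, which are equal, not complementary). The symmetry $\#(0,0)=\#(1,1)$, $\#(0,1)=\#(1,0)$ still holds because the corner contributes each ordered pair exactly once, but it would be cleaner to phrase the argument as ``on the two tails the pairs come in complementary couples, and the corner contributes one of each pair'' rather than asserting a global bitwise complement.
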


\begin{proof}
We first exhibit 2-{\rm IMOFS}$(4,2)$:
$$
\left[
\begin{array}{cccc}
  \cdot & \cdot & 01 & 10 \\ 
  \cdot & \cdot & 11 & 00 \\
  10 & 11 & 00 & 01 \\
  01 & 00 & 10 & 11 \\
\end{array}
\right].
$$
For $n\geq 6$, the above structure can be placed in the
bottom right-hand corner. Fill the remaining cells in the last two rows
with copies of $I(00)$ and the remaining cells in the last two columns
with copies of $I(01)$.
\end{proof}

A $(v_*,k_*,\lambda_*)$ orthogonal array is a $\lambda_*v_*^2 \times k_*$ 
array with entries chosen from a set $X$ of size $v_*$ such that
in every pair of columns of the array, each ordered pair from $X$
occurs exactly $\lambda_*$ times.  Let $H$ be a Hadamard matrix of
order $4\lambda_*$ in normalized form (so that the first column are
all $1$'s).  Then by the definition of a Hadamard matrix, the
remaining columns of $H$ form a $(2,4\lambda_*-1,\lambda_*)$
orthogonal array (with $X=\{1,-1\}$).  Existence results for Hadamard
matrices (see \cite{CK}) yield the following:

\begin{lemma}\label{eggshists}
There exists a $(2,4\lambda_*-1,\lambda_*)$ orthogonal array whenever 
$1\leq \lambda_*<167$ or $\lambda_*$ is a power of $2$.
\end{lemma}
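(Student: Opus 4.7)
The plan is to invoke the reduction already outlined in the paragraph preceding the lemma: from any Hadamard matrix $H$ of order $4\lambda_*$ in normalised form (first column all $1$'s), deleting that first column yields a $(2,4\lambda_*-1,\lambda_*)$ orthogonal array on $X=\{1,-1\}$, because the orthogonality of any two distinct columns of $H$ forces each of the four sign patterns $(\pm 1, \pm 1)$ to occur exactly $\lambda_*$ times among the $4\lambda_*$ rows. So the lemma reduces entirely to the existence of a Hadamard matrix of order $4\lambda_*$ for each $\lambda_*$ in the claimed range.

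For the power-of-$2$ case, I will appeal to the Sylvester (Kronecker) construction: the product $H\otimes\bigl[\begin{smallmatrix}1 & 1\\1 & -1\end{smallmatrix}\bigr]$ doubles the order of any Hadamard matrix, so iterating from $H_1=[1]$ produces Hadamard matrices of every order $2^k$. Writing $\lambda_*=2^m$ gives $4\lambda_*=2^{m+2}$, and the matrix $H_{2^{m+2}}$ exists for every $m\ge 0$.

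For $1\le \lambda_*<167$, I will invoke the standard tables of known Hadamard matrices compiled in \cite{CK}; the combined output of the Paley, Williamson, plug-in and related constructions (together with sporadic computer-search examples) covers every order $4\lambda_*$ with $\lambda_*<167$, the smallest open order being $4\cdot 167=668$. Combining the two existence ranges with the reduction above yields the lemma. Because the content is essentially an application of results collected elsewhere, there is no real obstacle here: the only points requiring care are correctly identifying the threshold $\lambda_*<167$ as the then-known range of Hadamard existence, and observing that the power-of-$2$ case supplies an infinite family of $\lambda_*$ beyond this range.
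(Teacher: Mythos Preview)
Your proposal is correct and follows exactly the approach the paper takes: the paper has no separate proof for this lemma, relying entirely on the preceding paragraph (Hadamard matrix $\Rightarrow$ orthogonal array by deleting the all-ones column) together with the citation to \cite{CK} for the Hadamard existence results. Your version simply spells out the Sylvester construction for the power-of-$2$ case explicitly, which the paper leaves implicit in the general citation.
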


\begin{theorem}\label{mbed}
If there exists a $(2,k_*,\lambda_*)$ orthogonal array and $4\lambda_*$
divides $b(n-b)$, then there exists a $k_*$-{\rm IMOFS}$(n;n-2b)$.
\end{theorem}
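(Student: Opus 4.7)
The plan is to build the desired $k_*$-IMOFS$(n;n-2b)$ by tiling the filled L-shaped region with $2\times 2$ sub-blocks and then filling each sub-block with an $I(\mathbf{r})$-pattern indexed by a row $\mathbf{r}$ of the given $(2,k_*,\lambda_*)$ orthogonal array $A$. The key observation is that, in every coordinate, $I(\mathbf{r})$ contains one $0$ and one $1$ in every row and every column, so the row/column balance required of each individual frequency square will be automatic at the block level.

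First, partition the filled region (the complement of the empty $(n-2b)\times(n-2b)$ subarray) into two rectangles: a $2b\times n$ bottom strip and an $(n-2b)\times 2b$ upper-right strip. Since $n$ is even and $0<b<n/2$, every side length here is even, so each strip admits a tiling by $2\times 2$ blocks. A routine count gives $bn/2 + b(n-2b)/2 = b(n-b)$ blocks in total. By the divisibility hypothesis we may write $b(n-b)=4\lambda_* t$ for a positive integer $t$, and then assign rows of $A$ to blocks so that each of the $4\lambda_*$ rows of $A$ is used exactly $t$ times; for concreteness one can group the blocks into $t$ groups of size $4\lambda_*$ and use each row of $A$ once per group. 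Placing $I(\mathbf{r})$ in the block assigned to row $\mathbf{r}$, and reading the $i$-th coordinate in each cell as an entry of $F_i$, defines $k_*$ incomplete frequency squares $F_1,\ldots,F_{k_*}$.

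The final step is to verify balance and orthogonality. Row and column balance of each $F_i$ is immediate, since within any $2\times 2$ block the $i$-th coordinate places one $r_i$ and one $1-r_i$ in every row and column. For orthogonality of $F_i$ and $F_j$, a block filled with $I(\mathbf{r})$ contributes two copies of $(r_i,r_j)$ and two copies of $(1-r_i,1-r_j)$; summing over all blocks, and using both that each row of $A$ is assigned to exactly $t$ blocks and that any prescribed ordered pair appears in columns $i,j$ of $A$ exactly $\lambda_*$ times, each of the four ordered pairs appears $2\cdot 2\lambda_*\cdot t=b(n-b)$ times in the superposition of $F_i$ and $F_j$. Since the filled region contains $4b(n-b)$ cells, this is precisely one quarter of them, confirming orthogonality. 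There is no substantive obstacle in the argument: the $I(\mathbf{r})$ pattern is engineered so that balance follows from the block decomposition alone, while the orthogonal array axiom together with the divisibility hypothesis is exactly what makes the orthogonality count come out evenly.
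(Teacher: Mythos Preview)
Your proof is correct and follows essentially the same approach as the paper: tile the L-shaped filled region by $2\times2$ blocks and fill each block with an $I(\mathbf r)$ pattern, using each row $\mathbf r$ of the orthogonal array exactly $b(n-b)/(4\lambda_*)$ times. The paper's proof is terser and leaves the tileability of the region and the verification of balance and orthogonality implicit, whereas you spell these out explicitly; but the construction is identical.
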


\begin{proof}
There are $4b(n-b)$ non-empty cells in an {\rm IMOFS}$(n;n-2b)$.  Let
$\alpha=4b(n-b)/16\lambda_*$.  Fill the non-empty cells using $\alpha$
copies of $I({\bf r})$ for each row ${\bf r}$ of the
$(2,k_*,\lambda_*)$ orthogonal array (we are assuming without loss of
generality that the symbols of the orthogonal array are $0$ and $1$).
\end{proof}

By \lref{eggshists} and \tref{mbed}, we get:

\begin{corollary}\label{boost}
If $2^{\beta}$ divides $b(n-b)$, then there exists 
a $(2^{\beta}-1)$-{\rm IMOFS}$(n;n-2b)$. 
\end{corollary}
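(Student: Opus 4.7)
The plan is to obtain \Cref{boost} as an essentially immediate consequence of \lref{eggshists} and \tref{mbed}. Given $\beta$ (the interesting case being $\beta\ge 2$), I would instantiate \tref{mbed} with the parameter choice $\lambda_*=2^{\beta-2}$, which is a power of $2$ and therefore lies in the range for which \lref{eggshists} guarantees a $(2,4\lambda_*-1,\lambda_*)=(2,2^{\beta}-1,2^{\beta-2})$ orthogonal array.

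The divisibility hypothesis plugs in on the nose: the assumption $2^{\beta}\mid b(n-b)$ is literally the condition $4\lambda_*\mid b(n-b)$ required by \tref{mbed}. Applying \tref{mbed} to the orthogonal array produced above then yields a $k_*$-IMOFS$(n;n-2b)$ with $k_*=4\lambda_*-1=2^{\beta}-1$, which is exactly what the corollary asserts.

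The only potential annoyance is bookkeeping for the edge cases $\beta\in\{0,1\}$, where the choice $\lambda_*=2^{\beta-2}$ is not an integer. These cases are essentially vacuous: $\beta=0$ asks for a $0$-IMOFS (an empty collection, trivially existing), and $\beta=1$ asks for a single incomplete frequency square under the condition $2\mid b(n-b)$, which is easily produced directly (for instance by filling the non-empty cells with $2\times 2$ blocks of the form $I(\mathbf{r})$ for $\mathbf{r}\in\{0,1\}$, just as in the proof of \tref{mbed}). Consequently there is no real obstacle; the proof is a single application of the two cited results, with a remark to dispose of the small $\beta$ cases if one wishes to state the corollary for all $\beta\ge 0$.
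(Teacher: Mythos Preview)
Your proposal is correct and matches the paper's own treatment: the paper derives \Cref{boost} directly from \lref{eggshists} and \tref{mbed} with no further argument, and your instantiation $\lambda_*=2^{\beta-2}$ is precisely the intended one. Your remark on the degenerate cases $\beta\in\{0,1\}$ is more than the paper bothers with, and is harmless.
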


It is worth noting that if $b$ is odd, the previous theorem is of
little use.  That is, the embedding approach in this section is not
apparently helpful in obtaining set of MOFS of order not divisible by
$4$ from sets of MOFS of order divisible by $4$, the latter of which
are far easier to construct.

\begin{theorem}\label{newbounds}
There exists a $17$-{\rm MOFS}$(n)$ for each 
order $n\equiv 2\pmod{4}$ where $n\geq 6$. 
\end{theorem}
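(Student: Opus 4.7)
The plan is to handle $n=6$ and $n=10$ as base cases supplied by the computational results of \sref{s:comp}, which exhibit a 17-MOFS$(6)$ and (via the block-circulant search) a 17-MOFS$(10)$. For every remaining order $n = 4k+2 \geq 14$, I would construct a 17-MOFS$(n)$ by combining a 17-IMOFS$(n; h)$, produced via \tref{mbed}, with a 17-MOFS$(h)$ placed into the hole, where $h \in \{6,10\}$ is chosen so that the divisibility hypothesis of \tref{mbed} is met.

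To build the incomplete frequency squares, I would apply \tref{mbed} with $\lambda_* = 6$ and $k_* = 17$. The required $(2,17,6)$ orthogonal array is obtained by deleting columns from the $(2,23,6)$ array supplied by a Hadamard matrix of order $24$, whose existence is guaranteed by \lref{eggshists}. The divisibility condition $4\lambda_* = 24 \mid b(n-b)$ with $b = (n-h)/2$ becomes $24 \mid 4(k-1)(k+2)$ when $h=6$ and $24 \mid 4(k-2)(k+3)$ when $h=10$. In both products the two factors differ by an odd number ($3$ or $5$), so one factor is always even, giving divisibility by $2$. Divisibility by $3$ then reduces to $k \equiv 1 \pmod 3$ in the first case and to $k \equiv 0$ or $2 \pmod 3$ in the second, so together the two cases cover every residue of $k$ modulo $3$.

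Accordingly, the case split I would use is: take $h=6$ when $k \equiv 1 \pmod 3$, and $h=10$ otherwise. In either case \tref{mbed} produces a 17-IMOFS$(n;h)$ whose hole can be completed by a 17-MOFS$(h)$ from the base cases, and a brief direct count confirms that the combined array is a bona fide 17-MOFS$(n)$: each row and column picks up $(n-h)/2$ zeros from the IMOFS piece and $h/2$ zeros from the MOFS piece, for a total of $n/2$; and each ordered pair in $\{0,1\}^2$ occurs $(n^2-h^2)/4$ times in the superposition of the IMOFS piece and $h^2/4$ times in the superposition of the MOFS piece, summing to $n^2/4$. The main obstacle is arranging the hole size $h$ to meet $24 \mid b(n-b)$ for every $k \geq 3$ while pinning $h$ to a value in $\{6,10\}$ for which a 17-MOFS$(h)$ is already known; the mod-$3$ dichotomy above accomplishes this cleanly, after which the remaining verifications are routine divisibility and counting.
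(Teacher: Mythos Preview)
Your proof is correct and is in fact more streamlined than the paper's own argument. Both start from the same two computational base cases (a $17$-MOFS$(6)$ and a $17$-MOFS$(10)$) and both rely on \tref{mbed} to manufacture a $17$-IMOFS$(n;h)$ whose hole is then filled. The difference lies in how the divisibility hypothesis $4\lambda_*\mid b(n-b)$ is arranged. The paper fixes the hole size at $h=10$ and, for $14\le n\le34$, chooses a different $\lambda_*\in\{6,14,24,36,50,66\}$ for each $n$; it then handles all larger $n$ recursively via \Cref{boost} with $b$ a multiple of $16$. You instead fix $\lambda_*=6$ once and for all, and toggle the hole size between $6$ and $10$ according to $k\bmod 3$ so that $6\mid(k-1)(k+2)$ or $6\mid(k-2)(k+3)$ as needed. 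Your parity observation (the two factors differ by an odd number, so their product is always even) is exactly what makes this work uniformly. The upshot is that your argument covers every $n\equiv2\pmod4$ with $n\ge14$ in one shot, avoiding both the list of six auxiliary orthogonal arrays and the recursive step; the paper's version is slightly more ad hoc but has the minor advantage of only ever needing the single base object $17$-MOFS$(10)$ to plug holes.
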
 

\begin{proof}
From the previous section, there exists a $17$-MOFS$(6)$ and a
$17$-MOFS$(10)$.  Now, from \lref{eggshists}, there exist a
$(2,k_*,\lambda_*)$ orthogonal array with $k_*\geq 17$ for
$\lambda_*\in \{6,14,24,36,50,66\}$.  By \tref{mbed}, there thus
exists a $17$-IMOFS$(10+4B;10)$ for $1\leq B\leq 6$.
Thus, by ``plugging'' the hole of size $10$ with a $17$-MOFS$(10)$,
there exists a $17$-MOFS$(n)$ for each $n\equiv 2\pmod{4}$ such that
$14\leq n\leq 34$.

Next, if $16$ divides $b$, then by Corollary~\ref{boost}, there exists
a $31$-IMOFS $(n;n-2b)$ for any $n>2b$. The result follows
recursively.
\end{proof}

\section{Maximal sets of MOFS}\label{s:max}

In this section we show the following existence result.

\begin{theorem}\label{t:5max}
There exists a $5$-{\rm maxMOFS}$(n)$ for
each order $n\equiv 2\pmod{4}$ where $n\geq 6$. 
\end{theorem}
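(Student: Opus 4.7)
The plan is to construct, for each $n\equiv 2\pmod 4$ with $n\geq 6$, a $5$-MOFS$(n)$ satisfying a full relation. Since $k=5$ and $\lambda=n/2$ are then both odd, \tref{t:relmax} will automatically imply maximality. The base case $n=6$ is supplied by the set displayed in \eref{e:5maxMOFSrel}, which satisfies a $(5,3)$-relation.

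For $n\geq 10$, I would employ the same embedding template as the proof of \tref{newbounds}: place the $5$-maxMOFS$(6)$ in the top-left $6\times 6$ corner of the $n\times n$ grid and fill the L-shaped complement with a $5$-{\rm IMOFS}$(n;6)$. Such an IMOFS exists by \tref{mbed} applied to the $(2,5,2)$ orthogonal array obtained by selecting five columns of the $(2,7,2)$ orthogonal array furnished by a Hadamard matrix of order $8$ (guaranteed by \lref{eggshists}). The required divisibility $8\mid b(n-b)$ with $b=(n-6)/2$ reduces, after writing $n=4k+2$, to $8\mid 4(k-1)(k+2)$, which holds because $k-1$ and $k+2$ have opposite parity.

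The main obstacle is to choose this IMOFS so that the plugged $5$-MOFS$(n)$ still satisfies a full relation. My aim is to preserve the core's $(5,3)$-relation with $X_1=\{1,\dots,5\}$ and $X_2=\{1,2,3\}$. By \lref{l:charactrel} this amounts to forcing the $\Z_2$-sum of the five squares, restricted to the L-shape, to equal $1$ on $\{1,\dots,5\}\times\{7,\dots,n\}$ and on $\{7,\dots,n\}\times\{1,2,3\}$, and to equal $0$ on the remaining L-shape cells. Each $I({\bf r})$-tile in the construction of \tref{mbed} contributes a $\Z_2$-sum equal to $\left[\begin{smallmatrix}1&0\\0&1\end{smallmatrix}\right]$ or $\left[\begin{smallmatrix}0&1\\1&0\end{smallmatrix}\right]$, according to the parity of the weight of ${\bf r}$. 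The difficulty is that the required block pattern has odd-width strips --- row $6$ sits alone inside $\overline{X_1}$ and columns $1$-$3$ form a $3$-wide strip in $X_2$ --- so a $2\times 2$ tiling aligned with the strip boundaries is impossible. I would resolve this by grouping $I({\bf r})$-tiles of opposite weight parity into $2\times 4$ or $4\times 2$ compound blocks whose combined $\Z_2$-sum is constant along one axis, then arranging these compound blocks to span the odd boundaries in a balanced way; if this direct route proves too awkward, one may instead first build $5$-maxMOFS$(n_0)$ for a small finite list of additional base orders $n_0\in\{10,14,\dots\}$ by ad hoc or computational means, and then run the IMOFS argument with a larger hole so that the resulting L-shape has only even strip widths.

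Once the construction is in place, \lref{l:charactrel} certifies the $(5,3)$-relation on the resulting $5$-MOFS$(n)$ and \tref{t:relmax} completes the proof of maximality.
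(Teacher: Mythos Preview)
Your overall strategy---embed the $5$-maxMOFS$(6)$ and extend it so that the resulting $5$-MOFS$(n)$ still satisfies a full relation, then invoke \tref{t:relmax}---is exactly the paper's strategy. The gap is in the execution: the specific extension you propose cannot be made to satisfy any full relation.

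The construction of \tref{mbed} fills the L-shape entirely with $2\times2$ blocks $I({\bf r})$, and you correctly observe that the $\Z_2$-sum of each such block is a checkerboard $\left[\begin{smallmatrix}p&1-p\\1-p&p\end{smallmatrix}\right]$. This means no union of $I({\bf r})$-tiles can ever produce a region whose $\Z_2$-sum is constant. But if you insist on preserving the $(5,3)$-relation, then the large block $X_1'\times X_2'$ (rows $6,\dots,n$, columns $4,\dots,n$) must have $\Z_2$-sum identically~$0$, which is therefore unattainable by this filling. Your ``compound block'' idea does not escape this: any juxtaposition of $I({\bf r})$-tiles still has a $\Z_2$-sum that is non-constant on every $2\times2$ window, so it cannot span the interior of a constant region. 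Your fallback of enlarging the hole does not help either: by \lref{l:parityrel}, any full relation on a $5$-MOFS$(n_0)$ with $n_0\equiv2\pmod4$ has $|X_1|$ and $|X_2|$ odd, so the horizontal strips in the L-shape always have odd heights $|X_1|$ and $n_0-|X_1|$, and the tiling obstruction persists for every base order.

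The paper resolves this by abandoning the fixed $(5,3)$-relation and instead aiming for a $(2\kappa+3,2\kappa+1)$-relation on the $(4\kappa+2)\times(4\kappa+2)$ square, with $|X_1|$ and $|X_2|$ each roughly $n/2$. The $6\times6$ core is \emph{not} placed in a corner; its rows are split as $5$ rows in $X_1$ and $1$ row in $X_1'$, and its columns as $3$ in $X_2$ and $3$ in $X_2'$, so that the core's own $(5,3)$-relation aligns with the global block pattern. Outside a few exceptional rows, the remaining cells can then be partitioned into $2\times2$ intercalates each of which has one row in $X_1$, one in $X_1'$, one column in $X_2$, and one in $X_2'$. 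Filling such an intercalate with $I({\bf r})$ for even-weight ${\bf r}$ gives exactly the required $\Z_2$-sum $\left[\begin{smallmatrix}0&1\\1&0\end{smallmatrix}\right]$. The exceptional rows are handled by hand with small $4\times4$ arrays that are not of the form $I({\bf r})$. The point you are missing is that letting the relation grow with $n$, and placing the core across the relation boundary rather than inside one block, is what makes the $I({\bf r})$-tiles compatible with the target $\Z_2$-sum.
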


\begin{proof}
Our starting point is the $5$-maxMOFS$(6)$ satisfying a
$(5,3)$-relation that was given in \eref{e:5maxMOFSrel}.  We now
embed that $5$-maxMOFS$(6)$ into a $5$-maxMOFS$(4\kappa+2)$, for each
$\kappa>1$. In the process we will add each binary $5$-tuple the same
number of times, thereby ensuring that orthogonality is preserved.
Also the resulting $5$-MOFS$(4\kappa+2)$ will satisfy a
$(2\kappa+3,2\kappa+1)$-relation, ensuring that it is maximal, by
\tref{t:relmax}. 
Let $X_1$ be the set of the first $2\kappa+3$ rows and $X_2$ the set of the
first $2\kappa+1$ columns.  Let $X_1'$ and $X_2'$ be the sets of rows and 
columns not in $X_1$ and $X_2$, respectively.  Our MOFS will have 
$\Z_2$-sum given by \eref{e:blocks}, where the top left block has rows $X_1$
and columns $X_2$. 

Next we describe the placement of the $5$-maxMOFS$(6)$. These are
placed in the first three columns of each of $X_2$ and $X_2'$, the
first 5 rows of $X_1$ and the first row of $X_1'$.  Let $C$ be the set
of cells which do not include the 36 cells just specified. Then
$|C|=(4\kappa+2)^2-6^2=16(\kappa^2+\kappa-2)$.  Excluding the first
four rows of $C$, observe that the remaining cells may be partitioned
into intercalates in \eref{e:blocks}, with one row in each of $X_1$
and $X_1'$ and one column in each of $X_2$ and $X_2'$.

We complete our construction by describing how to fill the remaining
cells in each frequency square.  We do so by first describing how to
fill the first four rows then the remaining cells of $C$ using a
partition of intercalates as described above.

First suppose that $\kappa\equiv 1$ or $2\pmod{4}$. Then $64$ divides
$|C|$; let $c=|C|/64$.  Observe that
$c=(\kappa^2+\kappa-2)/4\geq\kappa-1$.  We fill the cells of $C$ in
the first $4$ rows with $\kappa-1$ copies of the following array 
so that the tuples with an even number of ones occur in columns in
$X_2$ and tuples with an odd number of ones occur in columns in $X_2'$:
\begin{equation}\label{e:Aarray}
\left[
\begin{array}{cccc}
00011&00011 &11100&11100 \\
01100&01100 &10011&10011 \\
10001&10001 &01110&01110 \\
11110&11110 &00001&00001 \\
\end{array}
\right]
\end{equation}
Note that this array has balanced rows and columns.
There are 24 binary $5$-tuples not in \eref{e:Aarray}, which in turn
partition into 12 complementary pairs.  
Let ${\bf r_i}$ for $1\leq i\leq 12$ be the representatives from these pairs
which contain an even number of ones.

To fill the intercalates we first add $\kappa-1$ copies of each
$I({\bf r_i})$ to $C$.  We have thus far filled $64(\kappa-1)$
cells of $C$, including the cells in the first $4$ rows, with each
binary $5$-tuple occurring exactly $2(\kappa-1)$ times.  To fill the
remaining $64(c-\kappa+1)$ cells of $C$, we partition \emph{all} 32
binary $5$-tuples into 16 complementary pairs, represented by ${\bf r_i}$ 
for $1\leq i\leq 16$, and add $c-\kappa+1 \geq 0$ copies of
$I({\bf r_i})$ for each of the 16 possible values of $i$.  It is now
routine to check that the construction indeed results in
$5$-MOFS$(4\kappa+2)$ whose $\Z_2$-sum is \eref{e:blocks}.

Otherwise $\kappa\equiv 3$ or $0\pmod{4}$.  In this case
$16(\kappa^2+\kappa-2)\equiv 32\pmod{64}$.  Consider the following
array~$B$:
$$
\left[
\begin{array}{cccc}
11100&11010&11111&11001\\
10011&10101&10000&10110\\
01110&01011&00111&01101\\
00001&00100&01000&00010\\
\end{array}
\right]
$$
This array has balanced columns and 
includes every binary $5$-tuple with an odd number
of~$1$'s.  Therefore, if we take the complement of each tuple we
obtain an array $\overline{B}$ including every $5$-tuple with an even
number of~$1$'s.  Place exactly one copy of $B$ and ${\overline B}$ in
the first $4$ rows of $C$, with ${\overline B}$ in columns of $X_2$
and $B$ in columns of $X_2'$.  Note that this is possible since $\kappa\geq
3$.  The number of remaining cells is divisible by $64$ so we can
proceed as in the previous case.
\end{proof}

\section{Concluding remarks}\label{s:conc}

\tref{t:bach} showed that for $n\equiv2\pmod4$ there is a unique
bachelor frequency square. Our computations showed that there is no
other maximal $k$-MOFS$(6)$ with $k<5$. It would be very
interesting to know whether this generalises to larger
$n\equiv2\pmod4$.  Note that we do know that there is a maximal 
$5$-MOFS$(n)$, by \tref{t:5max}.  For $n\equiv0\pmod4$, the question
of how small a maximal set of MOFS$(n)$ can be, is wide open.  If it
turns out that there are no maximal sets with fewer than 5 MOFS aside
from those in \tref{t:bach} then that would be a significant difference
from Latin squares.  It is known \cite{DWW11} that maximal pairs of
mutually orthogonal Latin squares exist for all orders $n>6$ that are
not twice a prime.

In \tref{t:manyMOFS} we gave a lower bound on the number of complete
MOFS$(n)$ for $n\equiv0\pmod4$ (assuming the Hadamard conjecture). It would
be interesting to obtain a corresponding upper bound. In particular, it would
be nice to know whether the exponent in our bound is of the correct order.
Note that the corresponding problem for Latin squares has very recently
been solved \cite{BDS}.

Two interesting directions for possible generalisation of our results
are to frequency squares with 2 symbols that do not occur equally
often or to frequency squares with more than 2 symbols. In particular,
how many symbols does it take before bachelor frequency squares become
common and other small maximal sets become possible?

\subsection*{Acknowledgments}
This work was supported in part by Australian Research Council grant
DP150100506.  The first author thanks Daniel Mansfield for invaluable
C programming help.

  \let\oldthebibliography=\thebibliography
  \let\endoldthebibliography=\endthebibliography
  \renewenvironment{thebibliography}[1]{%
    \begin{oldthebibliography}{#1}%
      \setlength{\parskip}{0.4ex plus 0.1ex minus 0.1ex}%
      \setlength{\itemsep}{0.4ex plus 0.1ex minus 0.1ex}%
  }%
  {%
    \end{oldthebibliography}%
  }

\end{document}